\newtheorem{lem}{Lemma}
\newtheorem{theorem}{Theorem}
\newtheorem{assump}{Assumption}
\newcommand{\vast}{\bBigg@{4}}
\newcommand{\Vast}{\bBigg@{5}}
\newcommand{\vertiii}[1]{{\left\vert\kern-0.25ex\left\vert\kern-0.25ex\left\vert #1 
    \right\vert\kern-0.25ex\right\vert\kern-0.25ex\right\vert}}
\def\mbb{\mathbb}
\def\mb{\mathbf}
\def\mc{\mathcal}
\def\bs{\boldsymbol}
\begin{document}
	\title{\huge Linear convergence in optimization over directed graphs\\ with row-stochastic matrices} 
	\author{Chenguang Xi$^\dagger$, Van Sy Mai$^\ddagger$, Ran Xin$^\dagger$, Eyad H. Abed$^\ddagger$, and Usman A. Khan$^\dagger$
		\thanks{
			$^\dagger$C.~Xi, R. Xin, and U.~A.~Khan are with the Department of Electrical and Computer Engineering, Tufts University, 161 College Ave, Medford, MA 02155; {\texttt{chenguang.xi@tufts.edu, ran.xin@tufts.edu, khan@ece.tufts.edu}}. This work has been partially supported by an NSF Career Award \# CCF-1350264.
			$^\ddagger$V. S. Mai and E. H. Abed are with the Department of Electrical and Computer Engineering and the Institute for Systems Research, University of Maryland, College Park, MD 20742; \texttt{\{vsmai,abed\}@umd.edu}.}
	}
	\maketitle
	\begin{abstract}
		This paper considers a distributed optimization problem over a multi-agent network, in which the objective function is a sum of individual cost functions at the agents. We focus on the case when communication between the agents is described by a \emph{directed} graph. Existing distributed optimization algorithms for directed graphs require at least the knowledge of the neighbors' out-degree at each agent (due to the requirement of column-stochastic matrices). In contrast, our algorithm requires no such knowledge. Moreover, the proposed algorithm achieves the best known rate of convergence for this class of problems, $O(\mu^k)$ for $0<\mu<1$, where $k$ is the number of iterations, given that the objective functions are strongly-convex and have Lipschitz-continuous gradients.  Numerical experiments are also provided to illustrate the theoretical findings.
	\end{abstract}
	\begin{IEEEkeywords}
		Optimization algorithms; Agents and autonomous systems; Cooperative control; Directed graphs             
	\end{IEEEkeywords}

\section{Introduction}\label{s1}
In distributed optimization problems, a network of agents cooperatively minimizes a sum of local cost functions. Formally, we consider a decision variable $\mb{x}\in\mbb{R}^p$ and a strongly-connected network containing $n$ agents where each agent $i$ has only access to a local objective function $f_i(\mb{x}):\mbb{R}^p\rightarrow\mbb{R}$. The agents aim to minimize the sum of their objectives, $\sum_{i=1}^nf_i(\mb{x})$, through local information exchange. This class of problems has recently received much attention and has found various applications, e.g., in distributed learning,~\cite{distributed_Boyd,ml,ml2}, source localization,~\cite{distributed_Rabbit,distributed_Khan}, machine-learned physics,~\cite{li2016pure,li2016understanding,brockherde2017bypassing}, and formation control \cite{formationcontrol,formationcontrol2}. 

Many distributed optimization methods have been developed in recent years. The initial approaches were based on gradient descent,~\cite{uc_Nedic,cc_Lobel2,cc_nedic,cc_Ram2}, which is intuitive and computationally simple but usually slow due to the diminishing step-size used in the algorithm. \begin{color}{black}The convergence rates are shown to be~$O(\frac{\ln k}{\sqrt{k}})$ for arbitrary convex functions and~$O(\frac{\ln k}{k})$ for strongly-convex functions.\end{color} Afterwards, methods based on Lagrangian dual variables were developed, which include distributed dual decomposition,~\cite{dual_Terelius}, and distributed implementation of Alternating Direction Method of Multipliers (ADMM),~\cite{ADMM_Mota,ADMM_Wei,ADMM_Shi}. The convergence rates of these approaches accelerate to~$O(\mu^k),0<\mu<1$, for strongly-convex functions albeit at the expense of higher computation. To achieve both fast convergence rate and computational simplicity, some distributed algorithms do not (explicitly) use dual variables while keeping  a constant step-size. \begin{color}{black}For example, the distributed Nesterov-based method,~\cite{fast_Gradient}, achieves~$O(\frac{\ln k}{k^2})$ for arbitrary convex function under the bounded and Lipschitz gradients assumptions. \end{color}It can be interpreted to have an inner loop, where information is exchanged, within every outer loop where the optimization-step is performed. Refs.~\cite{EXTRA,Augmented_EXTRA} use a constant step-size and the history of gradient information to achieve an exact convergence  \begin{color}{black} at~$O(\frac{1}{k})$ for general convex functions and at $O(\mu^k)$ for strongly-convex functions.\end{color} 

All these methods,~\cite{uc_Nedic,cc_Lobel2,cc_nedic,cc_Ram2,dual_Terelius,ADMM_Mota,ADMM_Wei, ADMM_Shi,fast_Gradient,EXTRA,Augmented_EXTRA}, assume the multi-agent network to be an undirected (or balanced) graph, i.e., if agent~$i$ sends information to agent~$j$, then agent~$j$ also sends information to agent~$i$. In practice, however, it may not always be possible to assume undirected (or balanced) communication. For example, agents may broadcast at different power levels implying communication capability in one direction but not in the other. Moreover, an algorithm may still remain convergent even after removing a few slow communication link; a procedure that may result in a directed graph. It is of interest, thus, to develop optimization algorithms that are fast and are applicable to directed graphs. The challenge, however, arises in the fact that doubly-stochastic matrices, required typically in distributed optimization, can not be obtained, in general, over directed graphs. As a result, one is restricted to only a row-stochastic or a column-stochastic weight matrix. 

We now report the related work on optimization over directed graphs. The (Sub)gradient-Push (SP),~\cite{opdirect_Nedic,opdirect_Tsianous,opdirect_Tsianous2,opdirect_Tsianous3}, employs only column-stochastic matrices by applying the push-sum consensus,~\cite{ac_directed,ac_directed0}, to distributed (sub)gradient descent,~\cite{uc_Nedic}. Inspired by these ideas, Directed-Distributed (Sub)gradient Descent (D-DSD),~\cite{D-DGD,D-DPS}, combines surplus consensus,~\cite{ac_Cai1}, and distributed (sub)gradient descent,~\cite{uc_Nedic}. Ref.~\cite{opdirect_Makhdoumi} combines the weight-balancing technique in~\cite{c_Hooi-Tong} with the distributed (sub)gradient descent,~\cite{uc_Nedic}. These (sub)gradient-based methods,~\cite{opdirect_Nedic,opdirect_Tsianous,opdirect_Tsianous2,opdirect_Tsianous3,D-DGD,D-DPS,opdirect_Makhdoumi}, restricted by diminishing step-sizes, converge at~$O(\frac{\ln k}{\sqrt{k}})$. To accelerate the convergence, DEXTRA,~\cite{DEXTRA} combines push-sum consensus,~\cite{ac_directed,ac_directed0}, and EXTRA,~\cite{EXTRA}, and converges linearly under the strong-convexity assumption with the step-size being chosen in some interval. This interval of step-size is later relaxed in Refs.~\cite{ADD-OPT,opdirect_nedicLinear}, while keeping the linear convergence. Note that although constructing a doubly-stochastic matrix is avoided, all of the aforementioned methods,~\cite{opdirect_Nedic,opdirect_Tsianous,opdirect_Tsianous2,opdirect_Tsianous3,D-DGD,D-DPS,opdirect_Makhdoumi,DEXTRA,ADD-OPT,opdirect_nedicLinear}, require each agent to know its out-degree to implement a column-stochastic weight matrix. This requirement is impractical in many situations, especially when the agents use a broadcast-based communication.

\begin{color}{black}Compared with column-stochastic matrices, row-stochastic matrices are much easier to implement in a distributed fashion as each agent can locally decide the weights. The proposed algorithm in this paper builds on Ref.~\cite{opdirect_Mai}, which employs only row-stochastic matrices with the convergence rate  of~$O(\frac{\ln k}{\sqrt{k}})$ for arbitrary convex functions. In this paper, we propose a fast and fully distributed algorithm to solve the distributed optimization problem over directed graphs that only requires row-stochastic matrices. To overcome the imbalance\footnote{\begin{color}{black}Recall from consensus that with row-stochastic matrices, all agents agree on some (linear-convex) function of the initial conditions whose coefficients are not necessarily~$\frac{1}{n}$, thus creating an imbalance in the way the initial conditions are weighted. In distributed optimization, this leads to convergence to a suboptimal solution, please see~\cite{D-DGD} for precise convergence arguments.\end{color}} that is caused by employing only row-stochastic matrices, we have at each agent an additional variable that converges asymptotically to the left eigenvector of the row-stochastic weight matrix. The gradient information is later divided by this additional variable to cancel the imbalance. For the protocol to work with only row-stochastic matrices, we do however need to assume that agents have and know their identifiers, e.g.,~$1,\cdots,n$. The algorithm requires no information of agents' out-degree and achieves a linear convergence rate, i.e.,~$O(\mu^k),0<\mu<1$, for strongly-convex functions.\end{color}

The remainder of the paper is organized as follows. Section~\ref{s2} formulates the problem and describes the algorithm with appropriate assumptions. Section~\ref{s3} states the main convergence results, the proofs of which are provided in Section~\ref{s4}. We show numerical results in Section~\ref{s5} and Section~\ref{s6} concludes the paper.

\textbf{Notation:} We use lowercase italic letters to denote scalars, lowercase bold letters to denote vectors, and uppercase italic letters to denote matrices, e.g.,~$x\in\mbb{R}$,~$\mb{x}\in\mbb{R}^n$,~$X\in\mbb{R}^{n\times n}$, for some positive integer~$n$. The matrix,~$I_n$, represents the~$n\times n$ identity, and~$\mb{1}_n$ and~$\mb{0}_n$ are the~$n$-dimensional vectors of all~$1$'s and~$0$'s. We denote~$\mb{e}_i=[0,\cdots,1_i,\cdots,0]^\top$. For an arbitrary vector~$\mb{x}$, we denote~$[\mb{x}]_i$ as its~$i$th element. For a differentiable function~$f(\mb{x}):\mbb{R}^p\rightarrow\mbb{R}$,~$\nabla\mb{f}(\mb{x})$ denotes the gradient of~$f$ at~$\mb{x}$. The spectral radius of a matrix,~$A$, is represented by~$\rho(A)$ and~$\lambda_i(A)$ denotes the~$i$th largest (in magnitude) eigenvalue of~$A$. For a matrix~$X$, we denote~$\mbox{diag}(X)$ to be a diagonal matrix consisting of the corresponding diagonal elements of~$X$ and zeros everywhere else. \begin{color}{black}For an irreducible, row-stochastic matrix,~$A$, we denote its right and left eigenvectors corresponding to the eigenvalue of~$1$ by~$\mb{1}_n$ and~$\bs{\pi}^\top$, respectively, such that~$\bs{\pi}^\top\mb{1}_n = 1$. Depending on its argument, we denote~$\|\cdot\|$ as either a particular matrix norm, the choice of which will be clear in Lemma~\ref{lem3}, or a vector norm that is compatible with this particular matrix norm, i.e.,~$\|A\mb{x}\|\leq\|A\|\|\mb{x}\|$ for all matrices,~$A$, and all vectors,~$\mb{x}$. The notation~$\|\cdot\|_2$ denotes two-norm of vectors and matrices. Since all vector norms on finite-dimensional vector space are equivalent, we have the following:~$c{'}\|\cdot\|\leq\|\cdot\|_2\leq c\|\cdot\|, d{'}\|\cdot\|_2\leq\|\cdot\|\leq d\|\cdot\|_2$, where~$c{'},c,d{'},d$ are some positive constants. See~\cite{hornjohnson:13} for details on vector and matrix norms. \end{color}

\section{Problem, Assumptions, and Algorithm}\label{s2}
In this section, we formulate the distributed optimization problem, formalize the assumptions, and describe our algorithm. Consider a strongly-connected network of~$n$ agents communicating over a \emph{directed} graph,~$\mc{G}=(\mc{V},\mc{E})$, where~$\mc{V}$ is the set of agents and~$\mc{E}$ is the collection of ordered pairs,~$(i,j),i,j\in\mc{V}$, such that agent~$j$ can send information to agent~$i$. Define~$\mc{N}_i^{{\scriptsize \mbox{in}}}$ to be the collection of agent~$i$ itself and it's in-neighbors\footnote{Unlike related literature~\cite{opdirect_Nedic,opdirect_Tsianous,opdirect_Tsianous2,opdirect_Tsianous3,D-DGD,D-DPS,opdirect_Makhdoumi,DEXTRA,opdirect_nedicLinear,ADD-OPT}, we do not give the definition of out-neighbors,~$\mc{N}_i^{{\scriptsize \mbox{out}}}$, as such knowledge is not required.}, i.e., the set of agents that can send information to agent~$i$. We focus on solving an optimization problem that is distributed over the above multi-agent network. In particular, the agents cooperatively solve the following optimization problem:
\begin{align}
\mbox{P1}:
\quad\mbox{min  }&f(\mb{x})=\sum_{i=1}^nf_i(\mb{x}),\nonumber
\end{align}
where each local objective function,~$f_i:\mbb{R}^p\rightarrow\mbb{R}$, is convex and differentiable and known only to agent~$i$. Our goal is to develop a distributed iterative algorithm such that each agent converges to the global solution of Problem P1 given that the underlying graph,~$\mc{G}$, is directed.

Before we describe the algorithm, we formalize the set of assumptions we will use for our results. The following assumptions are standard in the literature concerning distributed and smooth optimization, see e.g.,~\cite{EXTRA,Augmented_EXTRA,DEXTRA,ADD-OPT,opdirect_nedicLinear}.
\begin{assump}\label{asp1}
	\begin{color}{black}The directed graph is strongly-connected and the agents have and know their unique identifiers, e.g.,~$1,\cdots,n$.\end{color}
\end{assump}
\begin{assump}\label{asp2}
	Each local function,~$f_i$, is differentiable and strongly-convex with Lipschitz continuous gradients, i.e., for any~$i$ and~$\mb{x}_1, \mb{x}_2\in\mbb{R}^p$,
	\begin{color}{black}
	\begin{enumerate}[label=(\alph*)]
		\item there exists a positive constant~$s$ such that
		$$f_i(\mb{x}_1)-f_i(\mb{x}_2)\leq\mb{\nabla} f_i(\mb{x}_1)^\top~(\mb{x}_1-\mb{x}_2)-\frac{s}{2}\|\mb{x}_1-\mb{x}_2\|_2^2;$$
		\item there exists a positive constant~$l$ such that
		$$\|\mb{\nabla} f_i(\mb{x}_1)-\mb{\nabla} f_i(\mb{x}_2)\|_2\leq l\|\mb{x}_1-\mb{x}_2\|_2.$$
	\end{enumerate}
    \end{color}
\end{assump}
\begin{color}{black}
	\noindent Clearly, the Lipschitz-continuity and strongly-convexity constants for the global objective function~$f(\mb{x})$ are~$nl$ and~$ns$, respectively. From the strong-convexity requirement, we know that the optimal solution exists and is unique and finite. 
\end{color}

To solve Problem P1, we propose the following algorithm. Each agent~$i$ maintains three vectors:~$\mb{x}_{k,i}$,~$\mb{z}_{k,i}\in\mbb{R}^p$, and~$\mb{y}_{k,i}\in\mbb{R}^n$, where~$k$ is the discrete-time index. At the~$k$th iteration, agent~$i$ performs the following updates:
\begin{subequations}\label{alg1}
	\begin{align}
	\mb{x}_{k+1,i}=&\sum_{j=1}^na_{ij}\mb{x}_{k,j}-\alpha\mb{z}_{k,i},\label{alg1a}\\
	\mb{y}_{k+1,i}=&\sum_{j=1}^na_{ij}\mb{y}_{k,j},\label{alg1b}\\
	\mb{z}_{k+1,i}=&\begin{color}{black}\sum_{j=1}^na_{ij}\mb{z}_{k,j}\end{color}+\frac{\nabla f_i(\mb{x}_{k+1,i})}{[\mb{y}_{k+1,i}]_i}-\frac{\nabla f_i(\mb{x}_{k,i})}{[\mb{y}_{k,i}]_i},\label{alg1d}
	\end{align}
\end{subequations}
where~$a_{ij}$ is the weight agent~$i$ assigns to agent~$j$,~$\alpha>0$ is a constant step-size,~$\nabla f_i(\mb{x}_{k,i})\in\mbb{R}^p$ is the gradient of~$f_i$ at~$\mb{x}_{k,i}$, and~$[\mb{y}_{k,i}]_i$ denotes the~$i$th element of~$\mb{y}_{k,i}$. Moreover, the weights are such that
\begin{align}
a_{ij}&=\left\{
\begin{array}{rl}
>0,&j\in\mc{N}_i^{{\scriptsize \mbox{in}}},\\
0,&\mbox{otw.},
\end{array}
\right.
\quad\qquad
\sum_{j=1}^na_{ij}=1,\forall i,\nonumber
\end{align}
i.e., the weight matrix~$A=\{a_{ij}\}$ is row-stochastic. For any agent~$i$, it is initiated with an arbitrary vector~$\mb{x}_{0,i}$ and with~$\mb{z}_{0,i}=\nabla f_i(\mb{x}_{0,i}),\mb{y}_{0,i}=\mb{e}_i$. 

In essence, the updates in Eqs.~\eqref{alg1a} and~\eqref{alg1d} form a modified version of the algorithm in~\cite{Augmented_EXTRA}\footnote{Ref.~\cite{Augmented_EXTRA} does not require this division as it considers doubly-stochastic matrices where the right eigenvector is~$\mb{1}_n^\top$ resulting in optimal consensus and/or optimization.}, where the gradients are scaled by the updates in Eq.~\eqref{alg1b}. \begin{color}{black}Note that the update in Eq.~\eqref{alg1b} converges asymptotically to the left eigenvector,~$\bs{\pi}^\top$, of the row-stochastic weight matrix; from Perron-Frobenius theorem~\cite{plemmons:79}. By scaling the gradients with the iterates from Eq.~\eqref{alg1b}, the imbalance, see Footnote~1, caused by the row-stochastic matrix is canceled.\end{color} The modification further enables the algorithm's convergence to the optimal solution with the collection of weights being only row-stochastic but not necessarily doubly-stochastic.

To simplify the notation, we assume, without loss of generality, that the sequences,~$\{\mb{x}_{k,i}\}$ and~$\{\mb{z}_{k,i}\}$, in Eq.~\eqref{alg1}, have only one dimension, i.e.,~$p=1$; thus~$x_{k,i}$,~$z_{k,i}$,~$\nabla f_i(x_{k,i})\in\mbb{R}$,~$\forall i,k$. The proof can be extended to arbitrary~$p$ with the Kronecker product notation. We next write Eqs.~\eqref{alg1} in a matrix form by defining~$\mb{x}_k$,~$\mb{z}_k$,~$\nabla\mb{f}_k\in\mbb{R}^n$ and~$Y_k\in\mbb{R}^{n\times n}$ as
\begin{align}
\mb{x}_k&=[x_{k,1},\cdots,x_{k,n}]^\top,\nonumber\\
\mb{z}_k&=[z_{k,1},\cdots,z_{k,n}]^\top,\nonumber\\
\nabla\mb{f}_k&=[\nabla f_1(x_{k,1}),\cdots,\nabla f_n(x_{k,n})]^\top,\nonumber\\
Y_k&=[\mb{y}_{k,1},\cdots,\mb{y}_{k,n}]^\top,\nonumber\\
\widetilde{Y}_k&=\mbox{diag}(Y_k).\nonumber
\end{align}
Recall that~$A$ is a row-stochastic collection of weights~$a_{ij}$. Given that the graph is strongly-connected and~$A$ is non-negative with positive diagonals, it follows that~$\widetilde{Y}_k$ is invertible for any~$k$. Based on the notation above, we write Eq.~\eqref{alg1} in the matrix form equivalently as follows:
\begin{subequations}\label{alg1_matrix}
	\begin{align}
	\mb{x}_{k+1}=&A\mb{x}_{k}-\alpha\mb{z}_{k},\label{alg1_ma}\\
	Y_{k+1}=&AY_{k},\label{alg1_mb}\\
	\mb{z}_{k+1}=&\begin{color}{black}A\mb{z}_{k}\end{color}+\widetilde{Y}_{k+1}^{-1}\nabla \mb{f}_{k+1}-\widetilde{Y}_k^{-1}\nabla \mb{f}_{k},\label{alg1_md}
	\end{align}
\end{subequations}
where~$\widetilde{Y}_0=I_n,\mb{z}_0=\nabla\mb{f}_0$, and~$\mb{x}_0$ is arbitrary.  From either Eqs. \eqref{alg1} or \eqref{alg1_matrix}, we emphasize that the implementation needs no knowledge of agent's out-degree for any agent in the network, which is in contrast to the existing work, e.g., in~\cite{opdirect_Nedic,opdirect_Tsianous,opdirect_Tsianous2,opdirect_Tsianous3,D-DGD,D-DPS,opdirect_Makhdoumi,DEXTRA,ADD-OPT,opdirect_nedicLinear}.

\section{Main Results}\label{s3}
In this section, we present the linear convergence result of Eq.~\eqref{alg1_matrix}. We further define
\begin{align}
Y_\infty&=\lim_{k\rightarrow\infty}Y_k=\lim_{k\rightarrow\infty}A^k,\nonumber\\
\mb{x}^*&=x^*\mb{1}_n,\nonumber\\
\widehat{\mb{x}}_k&=Y_\infty\mb{x}_k,\nonumber\\
\widehat{\mb{z}}_k&=Y_\infty\mb{z}_k,\nonumber\\
\nabla\mb{f}^*&=[\nabla f_1(x^*),\cdots,\nabla f_n(x^*)]^\top,\nonumber\\
\nabla\widehat{\mb{f}}_k&=\frac{1}{n}~\mb{1}_n\mb{1}_n^\top~\left[\nabla f_1(\widehat{x}_k),...,\nabla f_n(\widehat{x}_k)\right]^\top,\nonumber
\end{align}
where~$x^*$ denotes the optimal solution, see Assumption~\ref{asp2}. We denote constants~$\tau$,~$\epsilon$, and~$\eta$ as
\begin{align}
\tau&=\left\|A-I_n\right\|_2,\nonumber\\
\epsilon&=\left\|I_n-Y_\infty\right\|_2,\nonumber\\
\eta&=\max\left(\left|1-\alpha nl\right|,\left|1-\alpha ns\right|\right),\nonumber
\end{align}
where~$\alpha$ is the step-size, and~$l$ and~$s$ are respectively the Lipschitz gradient and strong-convexity constants in Assumption~\ref{asp2}. Let~$y$ and~$\widetilde{y}$ be defined as
\begin{align}
y&=\sup_{k}\left\|Y_k\right\|_2,\nonumber\\
\widetilde{y}&=\sup_{k}\left\|\widetilde{Y}_k^{-1}\right\|_2.\nonumber
\end{align}
\begin{color}{black}Since~$A$ is non-negative and irreducible, and~$Y_0$ is an~$n\times n$ identity matrix, we have that~$Y_k$ is convergent, and all of its diagonal elements are nonzero and bounded, for all~$k$. Therefore,~$\widetilde{y}$ is finite.\end{color} We now provide some lemmas that will be useful in the remainder of the paper. 
\begin{lem}\label{lem2}
	(Nedic \textit{et al}.~\cite{opdirect_Nedic}) Consider~$Y_k$, generated from the row-stochastic matrix,~$A$, and its limit~$Y_\infty$. There exist~$0<\gamma_1<1$ and~$0<T<\infty$ such that
	\begin{align}\label{DkDinfty1}
	\left\|Y_k-Y_\infty\right\|_2\leq T\gamma_1^{k},\qquad  \forall k.
	\end{align}
\end{lem}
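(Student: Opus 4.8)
The plan is to recognize that~$Y_k=A^k$ (from~$Y_0=I_n$ and the recursion~$Y_{k+1}=AY_k$) and that~$Y_\infty$ is the Perron projection of~$A$ associated with the eigenvalue~$1$, and then to reduce the claim to a spectral-radius bound on the powers of the deflated matrix~$A-Y_\infty$.

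First I would invoke the structural properties of~$A$. Under Assumption~\ref{asp1} the graph is strongly connected, so~$A$ is irreducible; since~$A$ also has positive diagonal entries it is primitive (hence aperiodic). By the Perron--Frobenius theorem the eigenvalue~$1$ is therefore simple and strictly dominant, so every other eigenvalue satisfies~$|\lambda_i(A)|<1$ for~$i\geq2$, and the powers~$A^k$ converge to the rank-one projection~$Y_\infty=\mb{1}_n\bs{\pi}^\top$, where~$\mb{1}_n$ and~$\bs{\pi}^\top$ are the right and left Perron eigenvectors normalized by~$\bs{\pi}^\top\mb{1}_n=1$.

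Next I would establish the algebraic identity that does the real work. From~$A\mb{1}_n=\mb{1}_n$ and~$\bs{\pi}^\top A=\bs{\pi}^\top$ one obtains~$AY_\infty=Y_\infty A=Y_\infty$, and~$\bs{\pi}^\top\mb{1}_n=1$ gives~$Y_\infty^2=Y_\infty$. A short induction using these relations then shows, for every~$k\geq1$,
\begin{align}
(A-Y_\infty)^k=A^k-Y_\infty=Y_k-Y_\infty,\nonumber
\end{align}
so that~$\|Y_k-Y_\infty\|_2=\|(A-Y_\infty)^k\|_2$, and the problem reduces to bounding the powers of~$A-Y_\infty$.

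Finally, since deflating by~$Y_\infty$ annihilates exactly the eigenvalue-$1$ direction while leaving the remaining spectrum unchanged, we have~$\rho(A-Y_\infty)=\max_{i\geq2}|\lambda_i(A)|<1$. Fixing any~$\gamma_1$ with~$\rho(A-Y_\infty)<\gamma_1<1$, the Gelfand formula~$\lim_{k\rightarrow\infty}\|(A-Y_\infty)^k\|_2^{1/k}=\rho(A-Y_\infty)$ yields a constant~$T<\infty$ with~$\|(A-Y_\infty)^k\|_2\leq T\gamma_1^k$ for all~$k$, the finitely many small-$k$ terms (including~$k=0$) being absorbed into~$T$. I expect the main obstacle to be precisely the strict inequality~$\rho(A-Y_\infty)<1$: one must argue that removing the Perron projection keeps the subdominant eigenvalues strictly inside the unit disk, which is exactly where primitivity---ensured by the positive diagonal of~$A$---is indispensable, since without aperiodicity~$A$ could carry additional unit-modulus eigenvalues and~$A^k$ would fail to converge at all.
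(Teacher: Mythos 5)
Your proof is correct. Note that the paper does not prove this lemma at all---it is imported as a known result from Nedic \emph{et al.}---so there is no in-paper argument to compare against; your self-contained derivation is the standard one and is sound at every step: $Y_k=A^k$, primitivity of $A$ (irreducible with positive diagonal, since $i\in\mc{N}_i^{\scriptsize\mbox{in}}$) gives a simple, strictly dominant eigenvalue $1$, the identities $AY_\infty=Y_\infty A=Y_\infty=Y_\infty^2$ yield $(A-Y_\infty)^k=A^k-Y_\infty$ by induction, and Gelfand's formula converts $\rho(A-Y_\infty)<1$ into the geometric bound with the finitely many initial terms absorbed into $T$. It is worth observing that your argument reuses exactly the machinery the paper deploys in its proof of Lemma~\ref{lem3} (the deflation identity and $\rho(A-Y_\infty)<1$); the only difference is that the paper there chooses a special matrix norm with $\|A-Y_\infty\|<1$ and submultiplicativity, whereas you work directly in the $2$-norm via Gelfand---both are equivalent ways of turning the spectral gap into a geometric decay rate, and your version has the minor advantage of keeping the estimate in the norm actually appearing in the lemma's statement.
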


\begin{color}{black}\begin{lem}\label{lem3}
	For any~$\mb{a}\in\mbb{R}^n$, define~$\widehat{\mb{a}}=Y_\infty\mb{a}$. Then there exists~$0<\sigma<1$ such that
	\begin{align}\label{sigma_eq}
	\left\|A\mb{a}-\widehat{\mb{a}}\right\|\leq\sigma\left\|\mb{a}-\widehat{\mb{a}}\right\|.
	\end{align}
\end{lem}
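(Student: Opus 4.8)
The plan is to exploit the algebraic structure of $Y_\infty$ and reduce the claim to a spectral radius estimate. First I would record that, since $A$ is row-stochastic, irreducible, and has positive diagonal entries, it is primitive; hence by Perron--Frobenius $Y_\infty=\lim_k A^k=\mathbf{1}_n\bs{\pi}^\top$ is the (rank-one) spectral projector onto the eigenvalue-$1$ eigenspace. This yields the key relations $A Y_\infty=Y_\infty$ (from $A\mathbf{1}_n=\mathbf{1}_n$), $Y_\infty A=Y_\infty$ (from $\bs{\pi}^\top A=\bs{\pi}^\top$), and $Y_\infty Y_\infty=Y_\infty$ (from $\bs{\pi}^\top\mathbf{1}_n=1$). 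Using $\wh{\mb{a}}=Y_\infty\mb{a}$ and these identities, a direct expansion gives the central algebraic fact
\begin{align}
A\mb{a}-\wh{\mb{a}}=(A-Y_\infty)(\mb{a}-\wh{\mb{a}}),\nonumber
\end{align}
since $A\wh{\mb{a}}=\wh{\mb{a}}$, $Y_\infty\mb{a}=\wh{\mb{a}}$, and $Y_\infty\wh{\mb{a}}=\wh{\mb{a}}$ cause the cross terms to telescope. Consequently $\|A\mb{a}-\wh{\mb{a}}\|\leq\|A-Y_\infty\|\,\|\mb{a}-\wh{\mb{a}}\|$ by submultiplicativity, so it suffices to exhibit a norm in which $\|A-Y_\infty\|<1$.

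Next I would compute the spectrum of $A-Y_\infty$. Because $AY_\infty=Y_\infty$, we may write $A-Y_\infty=A(I_n-Y_\infty)$, which vanishes on the one-dimensional range of $Y_\infty=\mathbf{1}_n\bs{\pi}^\top$ and coincides with $A$ on the complementary $A$-invariant subspace $\mathrm{range}(I_n-Y_\infty)=\ker Y_\infty$. Hence the eigenvalues of $A-Y_\infty$ are $0$ together with the eigenvalues of $A$ other than the simple Perron eigenvalue $1$. Irreducibility makes the eigenvalue $1$ simple, and the positive diagonal (aperiodicity) forces every remaining eigenvalue to satisfy $|\lambda_i(A)|<1$. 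Therefore $\rho(A-Y_\infty)=|\lambda_2(A)|<1$.

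Finally, since $\rho(A-Y_\infty)<1$, a standard result on matrix norms (Horn and Johnson~\cite{hornjohnson:13}, as used for the choice of $\|\cdot\|$ announced in the notation) guarantees, for any $\ve>0$, the existence of a submultiplicative matrix norm $\|\cdot\|$ and a compatible vector norm with $\|A-Y_\infty\|\leq\rho(A-Y_\infty)+\ve$. Choosing $\ve$ small enough that this bound is strictly below $1$ and setting $\sigma:=\|A-Y_\infty\|<1$ completes the argument via the displayed inequality. This $\|\cdot\|$ is exactly the particular norm fixed once and for all for the remainder of the paper. I expect the main obstacle to be the middle step: one must justify carefully that the nonzero spectrum of $A-Y_\infty$ is precisely the non-Perron spectrum of $A$ (invoking simplicity of the eigenvalue $1$ and primitivity), rather than merely bounding $\|A-Y_\infty\|_2$, which need not be contractive in the Euclidean norm. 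The remaining steps are routine once this spectral radius estimate is in place.
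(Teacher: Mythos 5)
Your proposal is correct and follows essentially the same route as the paper: both derive the identity $A\mb{a}-\wh{\mb{a}}=(A-Y_\infty)(\mb{a}-\wh{\mb{a}})$ from $AY_\infty=Y_\infty Y_\infty=Y_\infty$, observe $\rho(A-Y_\infty)<1$, and then pick a matrix norm (with compatible vector norm) in which $\|A-Y_\infty\|<1$, setting $\sigma$ equal to that norm. Your extra spectral justification that the nonzero spectrum of $A-Y_\infty$ is exactly the non-Perron spectrum of $A$ is a welcome elaboration of a step the paper merely asserts.
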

\begin{proof}
Since~$A$ is irreducible, row-stochastic with positive diagonals, from Perron-Frobenius theorem~\cite{plemmons:79} we note that~$\rho(A)=1$, every eigenvalue of~$A$ other than~$1$ is strictly less than~$\rho(A)$, and~$\bs{\pi}^\top$ is a strictly positive (left) eigenvector corresponding to the eigenvalue of~$1$ such that~$\bs{\pi}^\top\mb{1}_n = 1$; thus~$Y^\infty = \lim_{k\rightarrow\infty} A^k = \mb{1}_n\bs{\pi}^\top$. We now have
\begin{eqnarray}
AY_{\infty} &=& A\mb{1}_n\bs{\pi}^{\top} ~=~ \mb{1}_n\bs{\pi}^{\top} ~=~ Y_{\infty}, \nonumber \\
Y_{\infty}Y_{\infty} &=& \mb{1}_n\bs{\pi}^{\top}\mb{1}_n\bs{\pi}^{\top} ~=~ \mb{1}_n\bs{\pi}^{\top} ~=~ Y_{\infty}, \nonumber
\end{eqnarray}
and thus~$AY_{\infty}-Y_{\infty}Y_{\infty}$ is a zero matrix. Therefore,
\begin{eqnarray}
	A\mb{a}-Y_{\infty}\mb{a} &=& (A-Y_{\infty})(\mb{a}-Y_{\infty}\mb{a}) \nonumber.
\end{eqnarray}
Next we note that~$\rho(A-Y_{\infty})<1$ due to which there exists a matrix norm such that~$\| A-Y_{\infty}\| <1$ with a compatible vector norm,~$\|\cdot\|$, see~\cite{hornjohnson:13}: Chapter~5 for details, i.e.,
\begin{eqnarray}
\left\|A\mb{a}-Y_{\infty}\mb{a}\right\| &\leq& \|A-Y_{\infty}\| \left\|\mb{a}-Y_{\infty}\mb{a}\right\|,
\end{eqnarray}
and the lemma follows with~$\sigma=\|A-Y_{\infty}\|$.
\end{proof}\end{color}

	\begin{lem}\label{yy-}
		There exists some constant~$\widetilde{T}$ such that the following inequalities hold for all~$k\geq 1$.
		\begin{enumerate}[label=(\alph*)]
			\item
			$\left\|\widetilde{Y}_{k}^{-1}-\widetilde{Y}_{\infty}^{-1}\right\|_2\leq \widetilde{y}^2\widetilde{T}\gamma_1^{k}$
			\item 
			$\left\|\widetilde{Y}_{k+1}^{-1}-\widetilde{Y}_{k}^{-1}\right\|_2\leq 2\widetilde{y}^2\widetilde{T}\gamma_1^{k}$
		\end{enumerate}
	\end{lem}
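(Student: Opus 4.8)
The plan is to reduce both parts to the geometric decay bound on $Y_k$ supplied by Lemma~\ref{lem2}, using the standard identity for the difference of two inverses. For part~(a) I would begin from
$$\widetilde{Y}_{k}^{-1}-\widetilde{Y}_{\infty}^{-1}=\widetilde{Y}_{k}^{-1}\left(\widetilde{Y}_{\infty}-\widetilde{Y}_{k}\right)\widetilde{Y}_{\infty}^{-1},$$
which is valid because both diagonal matrices are invertible (their diagonal entries are bounded away from zero, as remarked before the lemmas). Applying the two-norm together with submultiplicativity gives
$$\left\|\widetilde{Y}_{k}^{-1}-\widetilde{Y}_{\infty}^{-1}\right\|_2\leq\left\|\widetilde{Y}_{k}^{-1}\right\|_2\left\|\widetilde{Y}_{\infty}-\widetilde{Y}_{k}\right\|_2\left\|\widetilde{Y}_{\infty}^{-1}\right\|_2.$$

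It then remains to bound the three factors. The two inverse factors are each at most~$\widetilde{y}$: the finite-$k$ factor directly by the definition of~$\widetilde{y}$, and the limiting factor because~$\widetilde{Y}_{k}^{-1}\to\widetilde{Y}_{\infty}^{-1}$ (inversion is continuous on diagonals that stay bounded away from zero), so~$\|\widetilde{Y}_{\infty}^{-1}\|_2$ is a limit of quantities bounded by~$\widetilde{y}$ and hence also bounded by~$\widetilde{y}$. For the middle factor, I would observe that~$\widetilde{Y}_{\infty}-\widetilde{Y}_{k}=\mbox{diag}(Y_\infty-Y_k)$ is itself diagonal, so its two-norm equals~$\max_i|[Y_\infty-Y_k]_{ii}|$; since every entry of a matrix is dominated by its two-norm (because~$|[M]_{ij}|=|\mb{e}_i^\top M\mb{e}_j|\leq\|M\|_2$), this yields~$\|\widetilde{Y}_{\infty}-\widetilde{Y}_{k}\|_2\leq\|Y_\infty-Y_k\|_2\leq T\gamma_1^{k}$ by Lemma~\ref{lem2}. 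Combining the three bounds establishes part~(a) with the choice~$\widetilde{T}=T$.

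For part~(b) I would insert~$\widetilde{Y}_{\infty}^{-1}$ and use the triangle inequality followed by part~(a) at indices~$k+1$ and~$k$:
$$\left\|\widetilde{Y}_{k+1}^{-1}-\widetilde{Y}_{k}^{-1}\right\|_2\leq\left\|\widetilde{Y}_{k+1}^{-1}-\widetilde{Y}_{\infty}^{-1}\right\|_2+\left\|\widetilde{Y}_{\infty}^{-1}-\widetilde{Y}_{k}^{-1}\right\|_2\leq\widetilde{y}^2\widetilde{T}\gamma_1^{k+1}+\widetilde{y}^2\widetilde{T}\gamma_1^{k},$$
and since~$0<\gamma_1<1$ implies~$\gamma_1^{k+1}+\gamma_1^{k}\leq2\gamma_1^{k}$, the claimed factor of~$2$ drops out. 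I do not expect a genuine obstacle here: the argument is essentially bookkeeping around the resolvent identity. The only two points requiring a word of justification are the ones used implicitly above, namely that passing to the diagonal does not increase the two-norm, and that~$\|\widetilde{Y}_{\infty}^{-1}\|_2$ inherits the uniform bound~$\widetilde{y}$ in the limit; both are elementary.
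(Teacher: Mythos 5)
Your proof is correct and follows essentially the same route as the paper: the resolvent-type identity $\widetilde{Y}_{k}^{-1}-\widetilde{Y}_{\infty}^{-1}=\widetilde{Y}_{k}^{-1}(\widetilde{Y}_{\infty}-\widetilde{Y}_{k})\widetilde{Y}_{\infty}^{-1}$, submultiplicativity, Lemma~\ref{lem2}, and the triangle inequality for part (b). The only difference is cosmetic: you pin down $\|\mathrm{diag}(M)\|_2\leq\|M\|_2$ explicitly (so $\widetilde{T}=T$ works), whereas the paper absorbs this step into an appeal to norm equivalence and an unspecified constant $\widetilde{T}$.
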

	\begin{proof}
		The proof of (a) follows
		\begin{align}
		\left\|\widetilde{Y}_{k}^{-1}-\widetilde{Y}_{\infty}^{-1}\right\|_2&\leq\left\|\widetilde{Y}_{k}^{-1}\right\|_2\left\|\widetilde{Y}_{k}-\widetilde{Y}_{\infty}\right\|_2\left\|\widetilde{Y}_{\infty}^{-1}\right\|_2,\nonumber\\
		&\leq \widetilde{y}^2\widetilde{T}\gamma_1^{k},\nonumber
		\end{align}
		where the second inequality holds due to Lemma~\ref{lem2} and the fact that all matrix norms on finite-dimensional vector spaces are equivalent. The result in~(b) is straightforward by applying~(a), which completes the proof.
	\end{proof}

Based on the above discussion and notation, we finally denote~$\mb{t}_k$,~$\mb{s}_k\in\mbb{R}^3$, and~$G$,~$H_k\in\mbb{R}^{3\times3}$ for all~$k$ as
\begin{subequations} 
\begin{align}\label{t}
\mb{t}_k&=\left[
\begin{array}{c}
\left\|\mb{x}_k-\widehat{\mb{x}}_k\right\| \\
~\left\|\widehat{\mb{x}}_k-\mb{x}^*\right\|_2 \\
\left\|\mb{z}_k-\widehat{\mb{z}}_k\right\|
\end{array}
\right],
\qquad
\mb{s}_k=\left[
\begin{array}{cc}
\left\|\nabla\mb{f}_k\right\|_2 \\
0\\
0
\end{array}
\right],\\\label{G}
G&=\left[
\begin{array}{ccc}
\sigma & 0 &\alpha \\
\alpha cnl & \eta & 0\\
cd\epsilon\widetilde{y}l(\tau+\alpha nl)  & \alpha d\epsilon l^2\widetilde{y}n & \sigma+\alpha cd\epsilon l\widetilde{y}
\end{array}
\right],\\\label{Hk}
H_k&=\left[
\begin{array}{ccc}
0 & 0 & 0\\
(\alpha y\widetilde{y}^2\widetilde{T})\gamma_1^k & 0 & 0\\
(\alpha\epsilon\widetilde{y}l y+2\epsilon)d\widetilde{y}^2\widetilde{T}\gamma_1^k & 0 & 0
\end{array}
\right].
\end{align}
\end{subequations}
According to the definition of~$\mb{t}_k$ in Eq.~\eqref{t}, it is sufficient to show the linear convergence of~$\|\mb{t}_k\|_2$ to zero in order to prove that~$\mb{x}_k$ converges to~$\mb{x}^*$ linearly. The following theorem builds an inequality for analyzing~$\mb{t}_k$, which is an important result going forward.
\begin{theorem}\label{thm1}
	Let Assumptions~\ref{asp1} and~\ref{asp2} hold. We have 
	\begin{align}\label{thm1_eq}
	\mb{t}_{k+1}\leq G\mb{t}_{k}+H_{k}\mb{s}_{k},\qquad\forall k,
	\end{align}
\end{theorem}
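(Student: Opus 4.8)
The plan is to prove the vector inequality \eqref{thm1_eq} entrywise, deriving one scalar bound for each coordinate of $\mb{t}_{k+1}$: the $\mb{x}$-consensus error $\|\mb{x}_{k+1}-\widehat{\mb{x}}_{k+1}\|$, the optimality gap $\|\widehat{\mb{x}}_{k+1}-\mb{x}^*\|_2$, and the $\mb{z}$-consensus error $\|\mb{z}_{k+1}-\widehat{\mb{z}}_{k+1}\|$. Since every entry of $G$ and $H_k$ is nonnegative and every entry of $\mb{t}_k,\mb{s}_k$ is a nonnegative norm, it suffices to match each derived inequality against the corresponding row of $G\mb{t}_k+H_k\mb{s}_k$. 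Before starting I would record the algebraic consequences of $Y_\infty=\mb{1}_n\bs{\pi}^\top$ from Lemma~\ref{lem3}: the identities $AY_\infty=Y_\infty A=Y_\infty Y_\infty=Y_\infty$, the fact that $\widetilde{Y}_\infty=\mbox{diag}(\bs{\pi})$ so that $Y_\infty\widetilde{Y}_\infty^{-1}=\mb{1}_n\mb{1}_n^\top$ and hence $\|Y_\infty\widetilde{Y}_\infty^{-1}\|_2=n$, and the telescoping identity $\widehat{\mb{z}}_k=Y_\infty\mb{z}_k=Y_\infty\widetilde{Y}_k^{-1}\nabla\mb{f}_k$, which follows by induction from \eqref{alg1_md} using $Y_\infty A=Y_\infty$ and the initialization $\mb{z}_0=\nabla\mb{f}_0$. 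Throughout I would pass between $\|\cdot\|$ and $\|\cdot\|_2$ via the equivalence constants $c,d$.

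For the first coordinate, left-multiplying \eqref{alg1_ma} by $Y_\infty$ gives $\widehat{\mb{x}}_{k+1}=\widehat{\mb{x}}_k-\alpha\widehat{\mb{z}}_k$, so $\mb{x}_{k+1}-\widehat{\mb{x}}_{k+1}=(A\mb{x}_k-\widehat{\mb{x}}_k)-\alpha(\mb{z}_k-\widehat{\mb{z}}_k)$, and Lemma~\ref{lem3} applied to $\|A\mb{x}_k-\widehat{\mb{x}}_k\|$ reproduces row one, $\sigma\|\mb{x}_k-\widehat{\mb{x}}_k\|+\alpha\|\mb{z}_k-\widehat{\mb{z}}_k\|$. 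For the second coordinate I would insert the ideal descent term, writing $\widehat{\mb{x}}_{k+1}-\mb{x}^*=(\widehat{\mb{x}}_k-\mb{x}^*-\alpha n\nabla\widehat{\mb{f}}_k)+\alpha(n\nabla\widehat{\mb{f}}_k-\widehat{\mb{z}}_k)$. The first bracket is an exact gradient step on the global $f$ (which is $ns$-strongly convex with $nl$-Lipschitz gradient), so the standard strongly-convex contraction bounds it by $\eta\|\widehat{\mb{x}}_k-\mb{x}^*\|_2$. For the residual I would use $n\nabla\widehat{\mb{f}}_k=Y_\infty\widetilde{Y}_\infty^{-1}\nabla\mb{f}(\widehat{\mb{x}}_k)$, with $\nabla\mb{f}(\widehat{\mb{x}}_k):=[\nabla f_1(\widehat{x}_k),\dots,\nabla f_n(\widehat{x}_k)]^\top$, together with the identity for $\widehat{\mb{z}}_k$ to split it as $Y_\infty\widetilde{Y}_\infty^{-1}(\nabla\mb{f}(\widehat{\mb{x}}_k)-\nabla\mb{f}_k)+Y_\infty(\widetilde{Y}_\infty^{-1}-\widetilde{Y}_k^{-1})\nabla\mb{f}_k$, bounding the first piece by $cnl\|\mb{x}_k-\widehat{\mb{x}}_k\|$ via $\|Y_\infty\widetilde{Y}_\infty^{-1}\|_2=n$ and Lipschitz continuity, and the second by $y\widetilde{y}^2\widetilde{T}\gamma_1^k\|\nabla\mb{f}_k\|_2$ via Lemma~\ref{yy-}(a) and $\|Y_\infty\|_2\le y$. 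This yields rows two of $G$ and $H_k$.

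The third coordinate is the crux and the main obstacle. Left-multiplying \eqref{alg1_md} by $Y_\infty$ gives $\mb{z}_{k+1}-\widehat{\mb{z}}_{k+1}=(A\mb{z}_k-\widehat{\mb{z}}_k)+(I-Y_\infty)\big(\widetilde{Y}_{k+1}^{-1}\nabla\mb{f}_{k+1}-\widetilde{Y}_k^{-1}\nabla\mb{f}_k\big)$; Lemma~\ref{lem3} handles the first term (factor $\sigma$), and $\|I-Y_\infty\|_2=\epsilon$ reduces the second to bounding $\|\widetilde{Y}_{k+1}^{-1}\nabla\mb{f}_{k+1}-\widetilde{Y}_k^{-1}\nabla\mb{f}_k\|_2$, which I would split into $\widetilde{Y}_{k+1}^{-1}(\nabla\mb{f}_{k+1}-\nabla\mb{f}_k)$ and $(\widetilde{Y}_{k+1}^{-1}-\widetilde{Y}_k^{-1})\nabla\mb{f}_k$, controlled by $\widetilde{y}l\|\mb{x}_{k+1}-\mb{x}_k\|_2$ (Lipschitz) and Lemma~\ref{yy-}(b). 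The difficulty is that $\|\mb{x}_{k+1}-\mb{x}_k\|_2=\|(A-I)(\mb{x}_k-\widehat{\mb{x}}_k)-\alpha\mb{z}_k\|_2$ contains $\|\mb{z}_k\|_2$, which is not a coordinate of $\mb{t}_k$; to close the recursion I must re-express $\|\mb{z}_k\|_2$ through $\mb{t}_k$ and $\mb{s}_k$. The key observation is that $\widehat{\mb{z}}_k$ tracks the averaged gradient and, because $\sum_i\nabla f_i(x^*)=0$, the bulk term obeys $\|n\nabla\widehat{\mb{f}}_k\|_2\le nl\|\widehat{\mb{x}}_k-\mb{x}^*\|_2$. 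Writing $\mb{z}_k=(\mb{z}_k-\widehat{\mb{z}}_k)+(\widehat{\mb{z}}_k-n\nabla\widehat{\mb{f}}_k)+n\nabla\widehat{\mb{f}}_k$ and reusing the second-coordinate residual estimate then bounds $\|\mb{z}_k\|_2$ by a combination of all three entries of $\mb{t}_k$ plus a $\gamma_1^k\|\nabla\mb{f}_k\|_2$ term; this is precisely the origin of the $\|\widehat{\mb{x}}_k-\mb{x}^*\|_2$ coupling $\alpha d\epsilon l^2\widetilde{y}n$ in row three. Substituting back, using $\tau=\|A-I_n\|_2$, and collecting terms reproduces row three of $G$ and $H_k$, completing the entrywise verification.
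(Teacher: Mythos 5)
Your proposal is correct and follows essentially the same route as the paper's proof: the same three coordinate-wise bounds, the same insertion of the ideal gradient step $\widehat{\mb{x}}_k-\alpha n\nabla\widehat{\mb{f}}_k$ with the splitting through $Y_\infty\widetilde{Y}_\infty^{-1}\nabla\mb{f}_k$, and the same resolution of the third row by bounding $\|\mb{x}_{k+1}-\mb{x}_k\|_2$ via $\tau$ and then $\|\mb{z}_k\|_2$ through $\mb{z}_k-\widehat{\mb{z}}_k$, $\widehat{\mb{z}}_k-n\nabla\widehat{\mb{f}}_k$, and $n\nabla\widehat{\mb{f}}_k-\mb{1}_n\mb{1}_n^\top\nabla\mb{f}^*$ using $\mb{1}_n\mb{1}_n^\top\nabla\mb{f}^*=\mb{0}_n$. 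The auxiliary identities you record up front (the telescoped form $\widehat{\mb{z}}_k=Y_\infty\widetilde{Y}_k^{-1}\nabla\mb{f}_k$ and $\widehat{\mb{x}}_{k+1}=\widehat{\mb{x}}_k-\alpha\widehat{\mb{z}}_k$) are exactly the paper's Lemma~\ref{w-x-}.
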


The proof of theorem~\ref{thm1} is provided in Section~\ref{s4}. Note that Eq.~\eqref{thm1_eq} provides a linear inequality between~$\mb{t}_{k+1}$ and~$\mb{t}_{k}$ with matrices,~$G$ and~$H_k$. Thus, the convergence of~$\mb{t}_k$ is fully determined by~$G$ and~$H_k$. More specifically, in order to prove a linear convergence of~$\|\mb{t}_k\|_2$ going to zero, it is sufficient to show that~$\rho(G)<1$, where~$\rho(\cdot)$ denotes the spectral radius, as well as the linear decaying of~$H_k$, which is straightforward since~$0<\gamma_1<1$ (from Lemma~\ref{lem2}). In the following Lemma~\ref{lem_G}, we first show that with an appropriate step-size, the spectral radius of~$G$ is less than~$1$. We then show the linear convergence rate of~$G^k$ and~$H_k$ in Lemma~\ref{lem_GH}. We finally present the main result in Theorem~\ref{main_result}, which is based on these lemmas as well as Theorem~\ref{thm1}. 
\begin{lem}\label{lem_G}
	Consider the matrix~$G$ defined in Eq.~\eqref{G} as a function of the step-size,~$\alpha$, \begin{color}{black}denoted in this lemma as~$G_\alpha$ to motivate this dependence.\end{color} It follows that~$\rho(G_\alpha)<1$ if the step-size,~$\alpha\in(0,\overline{\alpha})$, where
	\begin{color}{black}
	\begin{align}\label{alpha_ub}
	\overline{\alpha}=\min\left\{\frac{\sqrt{\Delta^2+4cd\epsilon\widetilde{y}n^3l^2(l+s)s(1-\sigma)^2}-\Delta}{2cd\epsilon\widetilde{y}n^2l^2(l+s)},\frac{1}{nl}\right\},
	\end{align}
	and~$\Delta=cd\epsilon\widetilde{y}lns(\tau+1-\sigma)$,
	where~$c$ and~$d$ are the constants from the equivalence of~$\|\cdot\|$ defined in Lemma~\ref{lem3} and~$\|\cdot\|_2$. 
    \end{color}
\end{lem}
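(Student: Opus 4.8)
The plan is to exploit the fact that $G_\alpha$ is a nonnegative matrix and to characterize $\rho(G_\alpha)<1$ through the theory of M-matrices. First I would observe that every off-diagonal entry of $G_\alpha$ is nonnegative and that the diagonal entries $\sigma$, $\eta$, and $\sigma+\alpha cd\epsilon l\widetilde{y}$ are positive, so $G_\alpha\geq 0$ entrywise. For such a matrix, $\rho(G_\alpha)<1$ holds if and only if $I_3-G_\alpha$ is a nonsingular M-matrix, which in turn is equivalent to all of its leading principal minors being strictly positive (see~\cite{plemmons:79}). Hence the task reduces to verifying three scalar inequalities in $\alpha$.

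Before computing the minors, I would restrict attention to $\alpha\leq 1/(nl)$. Under this restriction both $1-\alpha nl$ and $1-\alpha ns$ are nonnegative, and since $s\leq l$ this gives the explicit form $\eta=1-\alpha ns$, so that $1-\eta=\alpha ns$. This is precisely where the second argument $1/(nl)$ of the minimum defining $\overline{\alpha}$ enters. I would then compute the leading principal minors of $I_3-G_\alpha$. The first is $1-\sigma>0$, since $\sigma\in(0,1)$ by Lemma~\ref{lem3}. The second is $(1-\sigma)(1-\eta)=(1-\sigma)\alpha ns$, which is positive for every $\alpha>0$. The third, $\det(I_3-G_\alpha)$, is the crux: after expanding the determinant (the middle cofactor vanishes because the $(1,2)$ entry is zero), substituting $1-\eta=\alpha ns$, and factoring out the common $\alpha>0$, one obtains
\begin{align}
\frac{\det(I_3-G_\alpha)}{\alpha}=(1-\sigma)^2ns-\Delta\,\alpha-cd\epsilon\widetilde{y}n^2l^2(l+s)\,\alpha^2,\nonumber
\end{align}
with $\Delta=cd\epsilon\widetilde{y}lns(\tau+1-\sigma)$ exactly as stated.

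Positivity of this expression is equivalent to the quadratic inequality $cd\epsilon\widetilde{y}n^2l^2(l+s)\,\alpha^2+\Delta\,\alpha-(1-\sigma)^2ns<0$. Since the leading coefficient is positive and the constant term $-(1-\sigma)^2ns$ is negative, the quadratic takes a negative value at $\alpha=0$ and has exactly one positive root; applying the quadratic formula identifies that positive root as the first argument of the minimum in Eq.~\eqref{alpha_ub}. Consequently, for $\alpha$ below both this root and $1/(nl)$ all three leading principal minors are positive, $I_3-G_\alpha$ is a nonsingular M-matrix, and therefore $\rho(G_\alpha)<1$, which completes the proof.

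The main obstacle is the determinant computation in the third step: expanding $\det(I_3-G_\alpha)$ correctly, carefully tracking the cross terms generated by the off-diagonal couplings $\alpha cnl$, $\alpha d\epsilon l^2\widetilde{y}n$, and $cd\epsilon\widetilde{y}l(\tau+\alpha nl)$, and then collecting them by powers of $\alpha$ so that the constant, linear, and quadratic coefficients collapse exactly into the clean form above, whose positive root matches the stated $\overline{\alpha}$. Everything else is routine sign-checking, relying only on $\sigma\in(0,1)$ and on the positivity of the constants $c,d,\epsilon,\widetilde{y},n,l,s$.
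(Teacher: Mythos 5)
Your proof is correct, and it takes a genuinely different route from the paper's. The paper argues by perturbation and continuity: it observes that $\rho(G_0)=1$ with the critical eigenvalue at $q=1$, shows via implicit differentiation of the characteristic polynomial that $\frac{dq}{d\alpha}\big|_{\alpha=0,q=1}=-ns<0$, and then locates the values of $\alpha$ at which $q=1$ reappears as an eigenvalue by solving the characteristic equation at $q=1$ — obtaining the roots $\alpha=0$, a negative root, and the positive root that becomes the first argument of the $\min$. Your M-matrix argument reaches the same quadratic (your expression for $\det(I_3-G_\alpha)/\alpha$ is exactly the paper's characteristic polynomial evaluated at $q=1$ and divided by $-\alpha$; I verified the cofactor expansion, the substitution $1-\eta=\alpha ns$, and that the discriminant $\Delta^2+4cd\epsilon\widetilde{y}n^3l^2(l+s)s(1-\sigma)^2$ matches), but it replaces the continuity argument with the exact equivalence ``$\rho(G_\alpha)<1$ iff $I_3-G_\alpha$ is a nonsingular M-matrix iff its leading principal minors are positive,'' which is legitimate since $G_\alpha\geq 0$ entrywise makes $I_3-G_\alpha$ a Z-matrix. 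This buys you something: the paper's continuity argument only tracks when $1$ is an eigenvalue, and strictly speaking needs Perron--Frobenius (the spectral radius of a nonnegative matrix is itself an eigenvalue) to rule out a complex eigenvalue leaving the unit disk without passing through $q=1$; your characterization sidesteps that subtlety entirely and yields a clean if-and-only-if condition on $\alpha$ over the range where $\eta=1-\alpha ns$. The only cosmetic remark is that $G_\alpha\geq 0$ holds for all $\alpha>0$ without the restriction $\alpha\leq 1/(nl)$ (since $\eta$ is defined as a maximum of absolute values), so that restriction is needed only to make $1-\eta=\alpha ns$ explicit, exactly as in the paper.
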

\begin{proof}
First, note that if~$\alpha<\frac{1}{nl}$ then~$\eta=1-\alpha ns,$ since~$l\geq s$ (see e.g.,~\cite{opt_literature0}: Chapter 3 for details). Setting~$\alpha=0$, we get
	\begin{align}
	G_0&=\left[
	\begin{array}{ccc}
	\sigma & 0 & 0 \\
	0 & 1 & 0\\
	cd\epsilon l\tau \widetilde{y} & 0 & \sigma
	\end{array}
	\right],
	\end{align}
	the eigenvalues of which are~$\sigma$,~$\sigma$, and~$1$. Note that~$0<\sigma<1$. Therefore,~$\rho(G_0)=1$, where~$\rho(\cdot)$ denotes the spectral radius. We now consider how does the spectral radius,~$\rho(G_\alpha)$, change if we slightly increase~$\alpha$ from 0. To this aim, we consider the characteristic polynomial of~$G_\alpha$ and let it go to zero, i.e., with~$\mbox{det}(qI_n-G_\alpha)=0,q\in\mbb{C}$, we have
	\begin{align}\label{det}
		\left((q-\sigma)^2-\alpha cd\epsilon l\widetilde{y}(q-\sigma)\right)(q-1+n\alpha s)\nonumber\\
		-\alpha cd(q-1+n\alpha s)\left(\epsilon l\tau \widetilde{y}+\alpha(\epsilon l^2\widetilde{y}n)\right)-\alpha^3cdn^2l^3\epsilon \widetilde{y}=0.
	\end{align} 
	Eq.~\eqref{det} shows an implicit relation between~$q$ and~$\alpha$. Since~$1$ is an eigenvalue of~$G_0$, Eq.~\eqref{det} holds when~$q=1$ and~$\alpha=0$. By taking the derivative of both sides of Eq.~\eqref{det} with~$q=1,\alpha=0$, we obtain that \textcolor{black}{$\frac{d q}{d\alpha}|_{\alpha=0,q=1}=-ns<0$}. In other words, when~$\alpha$ increases slightly from~$0$,~$\lambda_{1}(G_0)=1$ decreases\footnote{Note that a decrease in~$\alpha$ from~$0$ makes the eigenvalue of~$1$ increase, rendering a negative step-size infeasible for convergence, justifying the choice of a strictly positive step-size.}. Since~$\lambda_{2,3}(G_0)=\sigma<1$, we obtain that~$\rho(G_\alpha)<1$ when~$\alpha$ is slightly increased from~$0$ because of the continuity of eigenvalues as a function of the matrix elements.
	
\begin{color}{black}The next step is to find the range of step-sizes,~$(0,\overline{\alpha})$, such that all three eigenvalues are less than~$1$. To this aim, let~$q=1$ in Eq.~\eqref{det}, then by solving for~$\alpha$, we obtain the values of~$\alpha$ for which there will be an eigenvalue of~$1$. These values are the three roots of Eq.~\eqref{det} and are~$\alpha_1=0$, some~$\alpha_2<0$ (not applicable as with negative $\alpha$, there exists an eigenvalue that is greater than~$1$, see Footnote 4), and
\begin{color}{black}\[
\alpha_3=\frac{\sqrt{\Delta^2+4cd\epsilon\widetilde{y}n^3l^2(l+s)s(1-\sigma)^2}-\Delta}{2cd\epsilon\widetilde{y}n^2l^2(l+s)}>0,
\]  
where~$\Delta=cd\epsilon\widetilde{y}lns(\tau+1-\sigma)$. \end{color} Now note that when~$\alpha$ is zero, we have an eigenvalue of~$1$ and two eigenvalues of~$\sigma<1$. At~$\alpha_3>0$, we have an eigenvalue of~$1$ and there is no~$\alpha\in(0,\overline{\alpha})$ that leads to~$\lambda(G_\alpha)=1$ because no solution of Eq.~\eqref{det} lies in this interval. Hence, we conclude that all eigenvalues of~$G_\alpha$ are less than~$1$ for~$\alpha\in(0,\overline{\alpha})$, where~$\overline{\alpha}$ is defined in the Lemma's statement, by considering the fact that eigenvalues are continuous functions of a matrix. Therefore,~$\rho(G_\alpha)<1$, when~$\alpha\in(0,\overline{\alpha})$.\end{color}
\end{proof}
\begin{lem}\label{lem_GH}
	With the step-size,~$\alpha\in(0,\overline{\alpha})$, where~$\overline{\alpha}$ is defined in Eq.~\eqref{alpha_ub}, the following statements hold for all~$k$,
	\begin{enumerate}[label=(\alph*)]
		\item
		there exist~$0<\gamma_1<1$ and~$0<\Gamma_1<\infty$, where~$\gamma_1$ is defined in Eq.~\eqref{DkDinfty1}, such that
		$$\left\|H_k\right\|_2=\Gamma_1\gamma_1^k;$$
		\item 
		there exist~$0<\gamma_2<1$ and~$0<\Gamma_2<\infty$, such that
		$$\left\|G^k\right\|_2\leq\Gamma_2\gamma_2^k;$$
		\item 
		Let~$\gamma=\max\{\gamma_1,\gamma_2\}$ and~$\Gamma=\Gamma_1\Gamma_2/\gamma$. Then for all~$0\leq r\leq k-1$,
		$$\left\|G^{k-r-1}H_r\right\|_2\leq\Gamma\gamma^k.$$
	\end{enumerate}
\end{lem}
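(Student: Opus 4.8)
The plan is to treat the three parts in the order they are stated, since (c) is just a bookkeeping consequence of (a) and (b). None of the steps is deep: the one genuine analytic input is the consequence of Lemma~\ref{lem_G} that $\rho(G)<1$, and everything else is factoring and norm inequalities.

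For part (a), I would note that $H_k$ in Eq.~\eqref{Hk} factors cleanly as $H_k=\gamma_1^k\,\overline{H}$, where $\overline{H}$ is the constant matrix obtained by stripping the common scalar $\gamma_1^k$ from the two nonzero entries. Each entry of $\overline{H}$ is a finite product of the fixed constants $\alpha,y,\widetilde{y},\widetilde{T},\epsilon,l,d$, all of which are finite (recall $\widetilde{y}$ is finite by the remark following the definitions, and $y=\sup_k\|Y_k\|_2$ is finite since $Y_k$ converges). Because $\gamma_1^k>0$, we may pull it out of the norm to get $\|H_k\|_2=\gamma_1^k\|\overline{H}\|_2$, and setting $\Gamma_1:=\|\overline{H}\|_2$ gives the claimed \emph{equality}, with $\Gamma_1\in(0,\infty)$ since $\overline{H}\neq 0$. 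This step is purely mechanical.

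For part (b), the key input is Lemma~\ref{lem_G}: for $\alpha\in(0,\overline{\alpha})$ we have $\rho(G)<1$. I would then invoke the standard spectral-radius estimate (Gelfand's formula) $\rho(G)=\lim_{k\to\infty}\|G^k\|_2^{1/k}$. Fixing any $\gamma_2$ with $\rho(G)<\gamma_2<1$, the limit guarantees $\|G^k\|_2^{1/k}\le\gamma_2$, i.e. $\|G^k\|_2\le\gamma_2^k$, for all $k$ beyond some finite index $K$; for the finitely many remaining indices one simply absorbs the ratios $\|G^k\|_2/\gamma_2^k$ into the constant. Taking $\Gamma_2$ to be the maximum of these finitely many ratios (and at least $1$, to cover $G^0=I_n$) yields $\|G^k\|_2\le\Gamma_2\gamma_2^k$ for all $k$, with $\Gamma_2\in(0,\infty)$.

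For part (c), I would combine (a) and (b) via submultiplicativity of the induced $2$-norm. For $0\le r\le k-1$ both exponents $k-r-1$ and $r$ are nonnegative, so
\[
\|G^{k-r-1}H_r\|_2\le\|G^{k-r-1}\|_2\,\|H_r\|_2\le\Gamma_2\gamma_2^{\,k-r-1}\,\Gamma_1\gamma_1^{\,r}.
\]
Since $\gamma=\max\{\gamma_1,\gamma_2\}$ and the exponents are nonnegative, $\gamma_2^{\,k-r-1}\le\gamma^{\,k-r-1}$ and $\gamma_1^{\,r}\le\gamma^{\,r}$; multiplying gives $\gamma^{\,k-r-1}\gamma^{\,r}=\gamma^{k-1}=\gamma^k/\gamma$, so the right-hand side is at most $(\Gamma_1\Gamma_2/\gamma)\gamma^k=\Gamma\gamma^k$, exactly the asserted bound. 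The only non-elementary ingredient anywhere is the Gelfand-type statement in (b) that $\rho(G)<1$ forces geometric decay of $\|G^k\|_2$ at a rate arbitrarily close to $\rho(G)$; the one subtlety there is that $\gamma_2$ must be chosen \emph{strictly} above $\rho(G)$ (one cannot in general take $\gamma_2=\rho(G)$, as when $G$ is non-diagonalizable). I therefore do not anticipate a real obstacle, only careful tracking of which constants are finite and positive.
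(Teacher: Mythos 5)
Your proposal is correct and follows essentially the same route as the paper: part (a) by factoring out $\gamma_1^k$ and taking the norm of the constant remainder (the paper just writes the resulting $\Gamma_1$ explicitly), and part (c) by submultiplicativity combined with $\gamma=\max\{\gamma_1,\gamma_2\}$. The only cosmetic difference is in (b), where you invoke Gelfand's formula with a $\gamma_2$ strictly above $\rho(G)$, while the paper uses the equivalent standard fact that $\rho(G)<1$ guarantees some matrix norm with $\|G\|<1$ and then appeals to norm equivalence; both arguments rest on Lemma~\ref{lem_G} and yield the same geometric bound.
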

\begin{proof}
	(a) This is easy to verify according to Eq.~\eqref{t}, and by letting
	\begin{color}{black}
	$\Gamma_1=\sqrt{(\alpha y\widetilde{y}^2\widetilde{T})^2+(\epsilon d\widetilde{y}^2\widetilde{T})^2(2+\alpha y\widetilde{y}l)^2}$.
	\end{color}
	
	(b) Since the spectral radius of~$G$ is less than one, there exists some matrix norm of~$G$ that is also less than one. We let the value of this matrix norm of~$G$ to be~$\gamma_2$. Then, from the equivalence of norms, we have
	{\color{black}
	\begin{align}
	\left\|G^k\right\|_2\leq\Gamma_2\gamma_2^k,
	\end{align}
	for some positive~$\Gamma_2$.
    }
    
	(c) The proof of (c) follows from combining (a) and (b). 
\end{proof}
\begin{lem}\label{lem_polyak}
	(Polyak~\cite{polyak1987introduction}) If nonnegative sequences~$\{v_k\}$,~$\{u_k\}$,~$\{b_k\}$ and~$\{c_k\}$ are such that ~$\sum_{k=0}^{\infty} b_k < \infty$,~$\sum_{k=0}^{\infty} c_k < \infty$ and
	$$v_{k+1} \le (1+b_k)v_k - u_k + c_k, \quad \forall t\ge 0,$$ then~$\{v_k\}$ converges and~$\sum_{k=0}^{\infty} u_k < \infty$.
\end{lem}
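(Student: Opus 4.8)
The plan is to eliminate the step-dependent multiplicative factor $(1+b_k)$ by rescaling with the partial products of $1+b_k$, thereby reducing the hypothesis to a pure telescoping inequality, and then to invoke a standard monotone-convergence argument. First I would set $\beta_k=\prod_{j=0}^{k-1}(1+b_j)$ with $\beta_0=1$; since $b_k\ge 0$ and $\sum_k b_k<\infty$, the sequence $\{\beta_k\}$ is nondecreasing and bounded, hence converges to a finite limit $\beta_\infty\in[1,\infty)$, with $1\le\beta_k\le\beta_\infty$ for all $k$. Dividing the hypothesis $v_{k+1}\le(1+b_k)v_k-u_k+c_k$ by $\beta_{k+1}=(1+b_k)\beta_k$ and writing $a_k=v_k/\beta_k$, $\widetilde u_k=u_k/\beta_{k+1}$, $\widetilde c_k=c_k/\beta_{k+1}$ yields the clean inequality $a_{k+1}\le a_k-\widetilde u_k+\widetilde c_k$, where all three sequences are nonnegative. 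Because $\beta_{k+1}\ge 1$, we have $\widetilde c_k\le c_k$, so $\sum_k\widetilde c_k\le\sum_k c_k<\infty$.

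Next I would extract summability of $\{u_k\}$. Since $\widetilde u_k\ge 0$, summing the reduced inequality gives $a_{k+1}+\sum_{j=0}^k\widetilde u_j\le a_0+\sum_{j=0}^k\widetilde c_j\le a_0+\sum_{j=0}^\infty\widetilde c_j<\infty$; as the partial sums of $\widetilde u_j$ are nonnegative and uniformly bounded, $\sum_j\widetilde u_j<\infty$. Finally, from $\beta_{k+1}\le\beta_\infty$ we obtain $u_k=\beta_{k+1}\widetilde u_k\le\beta_\infty\widetilde u_k$, whence $\sum_k u_k\le\beta_\infty\sum_k\widetilde u_k<\infty$, which is the second claim.

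For convergence of $\{v_k\}$, I would first show that $\{a_k\}$ converges. Dropping the nonpositive term $-\widetilde u_k$ gives $a_{k+1}\le a_k+\widetilde c_k$; then I define the auxiliary sequence $w_k=a_k+\sum_{j=k}^\infty\widetilde c_j$, which is well defined since $\sum_j\widetilde c_j<\infty$. One checks $w_{k+1}=a_{k+1}+\sum_{j=k+1}^\infty\widetilde c_j\le a_k+\widetilde c_k+\sum_{j=k+1}^\infty\widetilde c_j=w_k$, so $\{w_k\}$ is nonincreasing; it is bounded below by $0$ since $a_k,\widetilde c_j\ge 0$, and hence converges. Because the tail $\sum_{j=k}^\infty\widetilde c_j\to 0$, it follows that $a_k=w_k-\sum_{j=k}^\infty\widetilde c_j$ converges as well, say $a_k\to a_\infty$. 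Since $v_k=\beta_k a_k$ and $\beta_k\to\beta_\infty$, we conclude $v_k\to\beta_\infty a_\infty$, establishing convergence of $\{v_k\}$.

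The only genuinely delicate point is the opening move: the hypothesis is a linear recursion with a step-dependent multiplicative coefficient, and naive telescoping fails because of the $(1+b_k)$ factor. The summability $\sum_k b_k<\infty$ is precisely what makes rescaling by $\beta_k$ legitimate, keeping the products bounded, and this single normalization converts the problem into the elementary supermartingale-type inequality $a_{k+1}\le a_k-\widetilde u_k+\widetilde c_k$; everything afterward is routine telescoping and monotone convergence. This is the deterministic analogue of the Robbins--Siegmund almost-supermartingale lemma.
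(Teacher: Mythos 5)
Your proof is correct in every step: the rescaling by the partial products $\beta_k=\prod_{j=0}^{k-1}(1+b_j)$ (bounded because $\prod_j(1+b_j)\le\exp\bigl(\sum_j b_j\bigr)<\infty$), the telescoping bound giving $\sum_k \widetilde u_k<\infty$ and hence $\sum_k u_k\le\beta_\infty\sum_k\widetilde u_k<\infty$, and the monotone auxiliary sequence $w_k=a_k+\sum_{j\ge k}\widetilde c_j$ yielding convergence of $a_k$ and therefore of $v_k=\beta_k a_k$. Note, however, that the paper itself gives no proof of this lemma --- it is quoted verbatim as a known result from Polyak's book --- so there is no internal argument to compare against; your derivation is the standard one (the deterministic Robbins--Siegmund argument, essentially the proof found in the cited reference), and it is a valid, self-contained justification of the statement as used in Theorem~\ref{main_result}.
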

To recap, we provide the linear iterative relation on~$\mb{t_k}$ with matrices~$G$ and~$H_k$ in Theorem 1. Subsequently, we show that~$\rho(G)<1$ and the linear decaying of~$H_k$ in Lemmas~\ref{lem_G} and~\ref{lem_GH} for sufficiently small step-size,~$\alpha$. By combining these relations and Lemma~\ref{lem_polyak}, we are ready to prove the linear convergence of our algorithm in the following theorem. 

\begin{theorem}\label{main_result}
	Let Assumptions~\ref{asp1} and~\ref{asp2} hold. With the step-size,~$\alpha\in(0,\overline{\alpha})$, where~$\overline{\alpha}$ is defined in Eq.~\eqref{alpha_ub}, the sequence,~$\{\mb{x}_k\}$, generated by Eq.~\eqref{alg1_matrix}, converges linearly to the optimal solution,~$\mb{x}^*$, i.e., there exist some constant~$M>0$ such that
	\begin{align}
	\left\|\mb{x}_k-\mb{x}^*\right\|_2\leq M(\gamma+\xi)^k,\qquad\forall k,
	\end{align}
	where~$\xi$ is a arbitrarily small constant.
\end{theorem}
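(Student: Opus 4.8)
The plan is to iterate the inequality of Theorem~\ref{thm1} into an explicit bound on~$\mb{t}_k$ and then read off the scalar rate. Since~$\alpha\in(0,\overline{\alpha})$, every entry of~$G$ and of each~$H_k$ is nonnegative, while~$\mb{t}_k$ and~$\mb{s}_k$ are entrywise nonnegative by construction; thus left-multiplication by~$G$ preserves the entrywise order, and unrolling~$\mb{t}_{k+1}\leq G\mb{t}_k+H_k\mb{s}_k$ from~$k=0$ gives, entrywise,
\begin{align}
\mb{t}_k\leq G^k\mb{t}_0+\sum_{r=0}^{k-1}G^{k-1-r}H_r\mb{s}_r,\qquad\forall k.\nonumber
\end{align}
Taking the~$2$-norm and applying Lemma~\ref{lem_GH}(b) and~(c) yields
\begin{align}
\left\|\mb{t}_k\right\|_2\leq\Gamma_2\gamma_2^k\left\|\mb{t}_0\right\|_2+\Gamma\gamma^k\sum_{r=0}^{k-1}\left\|\mb{s}_r\right\|_2.\nonumber
\end{align}
Since both~$\left\|\widehat{\mb{x}}_k-\mb{x}^*\right\|_2$ and (using the norm-equivalence constant~$c$ of Lemma~\ref{lem3})~$\left\|\mb{x}_k-\widehat{\mb{x}}_k\right\|_2$ are dominated by~$\left\|\mb{t}_k\right\|_2$, a geometric bound~$\left\|\mb{t}_k\right\|_2\leq M(\gamma+\xi)^k$ will immediately give~$\left\|\mb{x}_k-\mb{x}^*\right\|_2\leq\left\|\mb{x}_k-\widehat{\mb{x}}_k\right\|_2+\left\|\widehat{\mb{x}}_k-\mb{x}^*\right\|_2\leq M'(\gamma+\xi)^k$. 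It therefore remains to control~$\sum_{r=0}^{k-1}\left\|\mb{s}_r\right\|_2$.

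Next I would bound the source term~$\left\|\mb{s}_r\right\|_2=\left\|\nabla\mb{f}_r\right\|_2$ in terms of~$\mb{t}_r$ itself. By the componentwise Lipschitz bound in Assumption~\ref{asp2}(b),~$\left\|\nabla\mb{f}_r-\nabla\mb{f}^*\right\|_2\leq l\left\|\mb{x}_r-\mb{x}^*\right\|_2$, and the triangle inequality together with norm equivalence gives~$\left\|\mb{x}_r-\mb{x}^*\right\|_2\leq\left\|\mb{x}_r-\widehat{\mb{x}}_r\right\|_2+\left\|\widehat{\mb{x}}_r-\mb{x}^*\right\|_2\leq C\left\|\mb{t}_r\right\|_2$ for a suitable constant~$C$. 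Hence~$\left\|\mb{s}_r\right\|_2\leq B_1+B_2\left\|\mb{t}_r\right\|_2$ with~$B_1=\left\|\nabla\mb{f}^*\right\|_2$ and~$B_2=lC$. Substituting this into the previous display makes~$\left\|\mb{t}_r\right\|_2$ appear inside the sum, producing a self-referential inequality. \textbf{This circular dependence is the main obstacle}, because a priori we have no bound on the iterates~$\mb{x}_r$ (equivalently on the gradients), so we cannot yet treat the sum as a harmless constant factor.

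To close the argument I would break the circularity with Lemma~\ref{lem_polyak}. Writing~$a_k=\left\|\mb{t}_k\right\|_2$, using~$\gamma_2\leq\gamma$, and introducing the partial sum~$q_k=\sum_{r=0}^{k-1}a_r$, the bounds above combine into~$a_k\leq\gamma^k(\Gamma_2 a_0+\Gamma B_1 k+\Gamma B_2 q_k)$, whence
\begin{align}
q_{k+1}=q_k+a_k\leq\left(1+\Gamma B_2\gamma^k\right)q_k+\gamma^k\left(\Gamma_2 a_0+\Gamma B_1 k\right).\nonumber
\end{align}
This is precisely the form required by Lemma~\ref{lem_polyak} with~$b_k=\Gamma B_2\gamma^k$,~$u_k=0$, and~$c_k=\gamma^k(\Gamma_2 a_0+\Gamma B_1 k)$; since~$0<\gamma<1$, both~$\sum_k b_k$ and~$\sum_k c_k$ converge (the latter because~$\sum_k k\gamma^k<\infty$). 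The lemma then guarantees that~$q_k$ converges, hence is bounded by some~$Q<\infty$, and substituting back gives~$a_k\leq(\Gamma_2 a_0+\Gamma B_2 Q)\gamma^k+\Gamma B_1 k\gamma^k$. Finally, for any fixed~$\xi>0$ one has~$\gamma^k\leq(\gamma+\xi)^k$ and~$k\gamma^k\leq M_\xi(\gamma+\xi)^k$ for a constant~$M_\xi$ (since~$k(\gamma/(\gamma+\xi))^k\to0$), so~$a_k\leq M(\gamma+\xi)^k$, and the theorem follows from the reduction in the first paragraph. The only delicate points are verifying the entrywise nonnegativity needed to unroll the inequality and correctly matching the Polyak recursion, with the~$k\gamma^k$ term being exactly what forces the arbitrarily small slack~$\xi$ in the stated rate.
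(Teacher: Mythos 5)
Your proposal is correct and follows essentially the same route as the paper: unroll the inequality of Theorem~\ref{thm1}, bound $\left\|\mb{s}_r\right\|_2$ by an affine function of $\left\|\mb{t}_r\right\|_2$, break the resulting self-referential sum with Lemma~\ref{lem_polyak} applied to the partial sums, and absorb the residual $k\gamma^k$ factor into the slack $\xi$. Your explicit check of the entrywise nonnegativity needed to iterate the matrix inequality is a point the paper leaves implicit, but otherwise the two arguments coincide.
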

\begin{proof}
	We write Eq.~\eqref{thm1_eq} recursively, which results
	\begin{align}\label{thm2_eq1}
	\mb{t}_k\leq&G^k\mb{t}_0+\sum_{r=0}^{k-1}G^{k-r-1}H_r\mb{s}_r.
	\end{align}
	By taking two-norm on both sides of Eq.~\eqref{thm2_eq1}, and considering Lemma~\ref{lem_GH}, we obtain that
	\begin{align}\label{thm2_eq2}
	\left\|\mb{t}_k\right\|_2\leq&\left\|G^k\right\|_2\left\|\mb{t}_0\right\|_2+\sum_{r=0}^{k-1}\left\|G^{k-r-1}H_r\right\|_2\left\|\mb{s}_r\right\|_2,\nonumber\\
	\leq&\Gamma_2\gamma_2^k\left\|\mb{t}_0\right\|_2+\sum_{r=0}^{k-1}\Gamma\gamma^k\left\|\mb{s}_r\right\|_2,
	\end{align}
	in which we can bound~$\|\mb{s}_r\|_2$ as
	\begin{align}
	\left\|\mb{s}_r\right\|_2\leq&\left\|\nabla\mb{f}_r-\nabla\mb{f}^*\right\|_2+\left\|\nabla\mb{f}^*\right\|_2,\nonumber\\
	\leq&l\left\|\mb{x}_r-\widehat{\mb{x}}_r\right\|_2+l\left\|\widehat{\mb{x}}_r-\mb{x}^*\right\|_2+\left\|\nabla\mb{f}^*\right\|_2,\nonumber\\
	\leq&(c+1)l\left\|\mb{t}_r\right\|_2+\left\|\nabla\mb{f}^*\right\|_2.
	\end{align}
	Therefore, we have that for all~$k$
	\begin{align}\label{thm2_eq3}
	\left\|\mb{t}_k\right\|_2\leq&\bigg{(}\Gamma_2\|\mb{t}_0\|_2+(c+1)\Gamma  l\sum_{r=0}^{k-1}\|\mb{t}_r\|_2+\Gamma k\|\nabla\mb{f}^*\|_2\bigg{)}\gamma^k.
	\end{align}
	Denote~$v_k=\sum_{r=0}^{k-1}\|\mathbf{t}_r\|_2$,~$s_k=\Gamma_2\|\mathbf{t}_0\|_2+\Gamma k\|\nabla \mathbf{f}^*\|_2$, and~$b=(c+1)\Gamma l$, then Eq. \eqref{thm2_eq3} results into
	\begin{align}\label{thm2_newEq}
	\|\mathbf{t}_{k}\|_2 = v_{k+1} - v_k\leq(s_k + bv_k)\gamma^k,
	\end{align}
	which implies that~$v_{k+1} \le (1+b\gamma^k)v_k + s_k\gamma^k$. Applying Lemma~\ref{lem_polyak} with~$b_k = b\gamma^k$ and~$c_k = s_k\gamma^k$ (here~$u_k=0$), we have that~$v_k$ converges\footnote{In order to apply Lemma~\ref{lem_polyak}, we need to show that~$\sum_{k=0}^\infty s_k\gamma^k<\infty$, which follows from the fact that~$\lim_{k\rightarrow\infty}\frac{s_{k+1}\gamma^{k+1}}{s_{k}\gamma^{k}}=\gamma<1.$}.
    Moreover, since~$\{v_k\}$ is bounded, by Eq.~\eqref{thm2_newEq},~$\forall \mu \in (\gamma,1)$ we have
    \begin{equation}
    	\lim_{k\rightarrow\infty}\frac{\|\mathbf{t}_{k}\|_2}{\mu^k} \leq
    	\lim_{k\rightarrow\infty}\frac{(s_k + bv_k)\gamma^k}{\mu^k}=0.
    \end{equation} 
    Therefore,~$\|\mathbf{t}_{k}\|_2=O(\mu^k)$. In other words, there exists some positive constant~$\Phi$ such that for all~$k$, we have: 
	\begin{align}\label{thm2_eq6}
	\left\|\mb{t}_k\right\|_2\leq&\Phi(\gamma+\xi)^{k},
	\end{align}
	where~$\xi$ is a arbitrarily small constant.
	It follows that
	$\left\|\mb{x}_k-\mb{x}^*\right\|_2\leq\left\|\mb{x}_k-\widehat{\mb{x}}_k\right\|_2+\left\|\widehat{\mb{x}}_k-\mb{x}^*\right\|_2\leq (c+1)\left\|\mb{t}_k\right\|_2 \leq(c+1)\Phi(\gamma+\xi)^{k}$, which completes the proof.
\end{proof}
\begin{color}{black}\noindent Theorem~\ref{main_result} shows the linear convergence rate of our algorithm, the result of which is based on the iterative relation in Theorem~\ref{thm1}. The proposed algorithm works for a small enough step-size. However, the upper bound,~$\overline{\alpha}$, of this step-size is a function of the network parameters and cannot be computed locally. This notion of sufficiently small step-sizes is not uncommon in the literature where the step-size upper bound can be estimated in most circumstances, see e.g.,~\cite{uc_Nedic, EXTRA, opdirect_nedicLinear, ADD-OPT}. Furthermore, each agent must agree on the same value of step-size that may be pre-programmed to avoid implementing an agreement protocol. In next section, we present the proof of Theorem~\ref{thm1}.\end{color}

\section{Proof of Theorem~\ref{thm1}}\label{s4}
We first provide a few relevant auxiliary relations.

\subsection{Auxiliary Relations}
In the following, Lemma~\ref{w-x-} derives iterative equations that govern the sequences~$\widehat{\mb{z}}_k$ and~$\widehat{\mb{x}}_{k}$. Next, Lemma~\ref{gd_approach} is a standard result in the optimization literature,~\cite{opt_literature0}. It states that if we perform a gradient-descent step with a fixed step-size for a strongly-convex and smooth function, then the distance to optimizer shrinks by at least a fixed ratio. 
\begin{lem}\label{w-x-}
	The following equations hold for all~$k$:
	\begin{enumerate}[label=(\alph*)]
		\item
		$\widehat{\mb{z}}_k=Y_\infty\widetilde{Y}_k^{-1}\nabla\mb{f}_k$;
		\item 
		$\widehat{\mb{x}}_{k+1}=\widehat{\mb{x}}_k-\alpha\widehat{\mb{z}}_k$.
	\end{enumerate}
\end{lem}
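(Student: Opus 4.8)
The plan is to prove part (a) by induction on $k$ and then obtain part (b) by a one-line computation, with both steps resting on a single algebraic observation about $Y_\infty$. From the proof of Lemma~\ref{lem3} I already have $Y_\infty=\mb{1}_n\bs{\pi}^\top$ and $AY_\infty=Y_\infty$. The extra fact I need is the ``left'' version, $Y_\infty A=Y_\infty$, which follows immediately from $\bs{\pi}^\top A=\bs{\pi}^\top$ since $\bs{\pi}^\top$ is the left eigenvector of $A$ for the eigenvalue $1$. The consequence I will use repeatedly is that $Y_\infty A\mb{z}_k=Y_\infty\mb{z}_k=\widehat{\mb{z}}_k$, i.e. left-multiplication by $Y_\infty$ annihilates the consensus mixing by $A$.

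For (a), the base case $k=0$ is immediate: $\widehat{\mb{z}}_0=Y_\infty\mb{z}_0=Y_\infty\nabla\mb{f}_0$, which equals $Y_\infty\widetilde{Y}_0^{-1}\nabla\mb{f}_0$ because $\widetilde{Y}_0=I_n$ and $\mb{z}_0=\nabla\mb{f}_0$. For the inductive step I left-multiply the $\mb{z}$-recursion~\eqref{alg1_md} by $Y_\infty$ to get
$$\widehat{\mb{z}}_{k+1}=Y_\infty A\mb{z}_k+Y_\infty\widetilde{Y}_{k+1}^{-1}\nabla\mb{f}_{k+1}-Y_\infty\widetilde{Y}_k^{-1}\nabla\mb{f}_k.$$
Using $Y_\infty A\mb{z}_k=\widehat{\mb{z}}_k$ and then the induction hypothesis $\widehat{\mb{z}}_k=Y_\infty\widetilde{Y}_k^{-1}\nabla\mb{f}_k$, the first and last terms cancel, leaving exactly $\widehat{\mb{z}}_{k+1}=Y_\infty\widetilde{Y}_{k+1}^{-1}\nabla\mb{f}_{k+1}$, which closes the induction. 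The essential structural point is that the two correction terms in~\eqref{alg1_md} telescope once projected by $Y_\infty$.

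For (b), I left-multiply the $\mb{x}$-recursion~\eqref{alg1_ma} by $Y_\infty$ and again invoke $Y_\infty A=Y_\infty$:
$$\widehat{\mb{x}}_{k+1}=Y_\infty A\mb{x}_k-\alpha Y_\infty\mb{z}_k=Y_\infty\mb{x}_k-\alpha\widehat{\mb{z}}_k=\widehat{\mb{x}}_k-\alpha\widehat{\mb{z}}_k,$$
as claimed. I do not expect a genuine obstacle: everything reduces to the idempotence and invariance identities $Y_\infty A=AY_\infty=Y_\infty$ and $Y_\infty^2=Y_\infty$. The one point requiring care is recording $Y_\infty A=Y_\infty$, since the proof of Lemma~\ref{lem3} only explicitly wrote $AY_\infty=Y_\infty$; once the left-eigenvector identity is in hand, both parts are routine, with (a) being the slightly more delicate of the two because of the inductive telescoping.
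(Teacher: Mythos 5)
Your proposal is correct and follows essentially the same route as the paper: the paper unrolls the $\mb{z}$-recursion after left-multiplying by $Y_\infty$ and telescopes down to the initial condition $\widehat{\mb{z}}_0=Y_\infty\widetilde{Y}_0^{-1}\nabla\mb{f}_0$, which is exactly your induction written as a sum, and part (b) is identical. Your explicit remark that $Y_\infty A=Y_\infty$ comes from $\bs{\pi}^\top A=\bs{\pi}^\top$ is a minor but welcome addition, since the paper invokes this identity without restating its origin.
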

\begin{proof}
	By noting that~$Y_\infty A=Y_\infty$, we obtain from Eq. \eqref{alg1_md}
	\begin{align}
	\widehat{\mb{z}}_k&=\widehat{\mb{z}}_{k-1}+Y_\infty\widetilde{Y}_k^{-1}\nabla\mb{f}_k-Y_\infty\widetilde{Y}_{k-1}^{-1}\nabla\mb{f}_{k-1}.\nonumber
	\end{align}
	Do this recursively, and we have that
	\begin{align}
	\widehat{\mb{z}}_k=\widehat{\mb{z}}_0+Y_\infty\widetilde{Y}_k^{-1}\nabla\mb{f}_k-Y_\infty\widetilde{Y}_0^{-1}\nabla\mb{f}_{0}.\nonumber
	\end{align}
	Recall the initial condition~$\mb{z}_0=\nabla\mb{f}_0=\widetilde{Y}_0^{-1}\nabla\mb{f}_{0}$, since~$\widetilde{Y}_0=I_n$. Thus,~$\widehat{\mb{z}}_0=Y_\infty\widetilde{Y}_0^{-1}\nabla\mb{f}_{0}$. Then we obtain the result of (a). The proof of (b) follows directly from Eq.~\eqref{alg1_ma}; in particular,
	\begin{align}
	\widehat{\mb{x}}_{k+1}&=Y_\infty\left(A\mb{x}_{k}-\alpha\mb{z}_k\right)\nonumber\\
	&=\widehat{\mb{x}}_{k}-\alpha\widehat{\mb{z}}_{k},\nonumber
	\end{align}
	which completes the proof.
\end{proof}
\begin{lem}\label{gd_approach}
\begin{color}{black}	(Bubeck~\cite{opt_literature0}) Let Assumption~\ref{asp1} hold for the objective function,~$f(x)$, in P1, where~$ns$ and~$nl$ are the strong-convexity constant and Lipschitz continuous gradient constant, respectively. For any~$x\in\mbb{R}$, define~$x_+=x-\alpha\nabla f(x)$, such that $0<\alpha<\frac{2}{nl}$. Then~
$$\left\|x_+-x^*\right\|_2~\leq~\eta\left\|x-x^*\right\|_2,$$ where~$\eta=\max\left(\left|1-\alpha nl\right|,\left|1-\alpha ns\right|\right)$.\end{color}
\end{lem}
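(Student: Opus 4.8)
The plan is to reduce the one-step gradient map to multiplication by a scalar slope lying in the interval~$[ns,nl]$, and then bound the resulting contraction factor by maximizing over that interval. Since~$f$ is strongly-convex, its minimizer~$x^*$ is unique and satisfies the first-order optimality condition~$\nabla f(x^*)=0$. Using this, I would first rewrite the displacement of the gradient step about the optimizer as
$$x_+-x^*=\left(x-x^*\right)-\alpha\left(\nabla f(x)-\nabla f(x^*)\right),$$
so that the whole argument reduces to understanding the map~$x-x^*\mapsto\nabla f(x)-\nabla f(x^*)$ near~$x^*$.

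For~$x\neq x^*$ (the case~$x=x^*$ being trivial, as both sides vanish), I would introduce the effective slope~$g=\left(\nabla f(x)-\nabla f(x^*)\right)/\left(x-x^*\right)$ and show that~$ns\leq g\leq nl$. The lower bound follows from strong-convexity of~$f$ with constant~$ns$, which gives~$\left(\nabla f(x)-\nabla f(x^*)\right)\left(x-x^*\right)\geq ns\,|x-x^*|^2$, hence~$g\geq ns$; in particular~$g>0$. The upper bound follows from Lipschitz-continuity of the gradient with constant~$nl$, which gives~$|\nabla f(x)-\nabla f(x^*)|\leq nl\,|x-x^*|$, hence~$g\leq nl$. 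With this slope in hand the gradient step becomes a pure scaling,~$x_+-x^*=(1-\alpha g)(x-x^*)$, so that~$|x_+-x^*|=|1-\alpha g|\,|x-x^*|$.

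It then remains to bound~$|1-\alpha g|$ uniformly over~$g\in[ns,nl]$. Since~$g\mapsto 1-\alpha g$ is affine, the map~$g\mapsto|1-\alpha g|$ is convex, so its maximum over the closed interval~$[ns,nl]$ is attained at an endpoint, giving~$|1-\alpha g|\leq\max\left(|1-\alpha ns|,|1-\alpha nl|\right)=\eta$. Combining this with the previous display yields~$\|x_+-x^*\|_2\leq\eta\|x-x^*\|_2$, as claimed. I would finally observe that the hypothesis~$0<\alpha<\tfrac{2}{nl}$ is precisely what forces~$|1-\alpha nl|<1$ and, since~$ns\leq nl$, also~$|1-\alpha ns|<1$, so that~$\eta<1$ and the step is genuinely contractive; the inequality itself holds for any~$\alpha>0$, and the upper bound on~$\alpha$ is needed only to make~$\eta<1$, which is how~$\eta$ enters the second coordinate of~$G$.

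The main point to get right is the two-sided slope bound~$ns\leq g\leq nl$ obtained purely from first-order information, without assuming~$f$ to be twice differentiable, together with the observation that the extremal value of~$|1-\alpha g|$ over~$[ns,nl]$ is attained at an endpoint; everything else is routine. In arbitrary dimension the same argument goes through after replacing the scalar slope by the averaged Jacobian~$H=\int_0^1\nabla^2 f\left(x^*+t(x-x^*)\right)dt$ satisfying~$ns\,I\preceq H\preceq nl\,I$ when~$f$ is twice differentiable, and bounding~$\|I-\alpha H\|_2\leq\eta$ through its eigenvalues, which is consistent with the paper's reduction to~$p=1$ without loss of generality.
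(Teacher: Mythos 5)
Your proof is correct, but note that the paper never proves this lemma at all: it is imported as a known result, cited to Bubeck~\cite{opt_literature0}, with only a one-line paraphrase in the surrounding text. So there is no in-paper argument to compare against; what you have written is a self-contained replacement for the citation. Your route --- writing $x_+-x^*=(x-x^*)-\alpha\left(\nabla f(x)-\nabla f(x^*)\right)$, reducing the step to multiplication by the effective slope $g=\left(\nabla f(x)-\nabla f(x^*)\right)/(x-x^*)$, pinning $g\in[ns,nl]$ via strong monotonicity (obtained by adding the two strong-convexity inequalities, exactly what Assumption~\ref{asp2}(a) summed over $i$ gives) and gradient Lipschitz continuity, and then maximizing the convex function $g\mapsto|1-\alpha g|$ at the endpoints of $[ns,nl]$ --- is sound, and it matches the lemma's actual one-dimensional statement, which is all the paper needs after its reduction to $p=1$ (indeed Lemma~\ref{gd_approach} is only ever applied to $\widehat{\mb{x}}_k$, whose entries are identical, so the step is effectively scalar). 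Your observation that the displayed inequality holds for every $\alpha>0$, with the hypothesis $0<\alpha<\frac{2}{nl}$ needed only to force $\eta<1$ (and that $|1-\alpha ns|<1$ then follows automatically from $s\leq l$), is also correct and is precisely what the spectral-radius argument in Lemma~\ref{lem_G} relies on. The one caveat concerns your closing remark about arbitrary $p$: the averaged-Hessian argument assumes $f$ is twice differentiable, which the paper does not; for merely differentiable strongly convex functions in higher dimension one would instead expand $\|x_+-x^*\|_2^2$ and invoke co-coercivity of the gradient. Since you flag this as an aside and the lemma as stated (and as used) is one-dimensional, it does not affect the validity of your proof.
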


\subsection{Proof of Theorem~\ref{thm1}}
We now provide the proof of Theorem~\ref{thm1}. To this aim, we will bound~$\|\mb{x}_{k+1}-\widehat{\mb{x}}_{k+1}\|$,~$\|\widehat{\mb{x}}_{k+1}-\mb{x}^*\|_2$, and~$\|\mb{z}_{k+1}-\widehat{\mb{z}}_{k+1}\|$ by the linear combinations of their past values, i.e.,~$\|\mb{x}_{k}-\widehat{\mb{x}}_{k}\|$,~$\|\widehat{\mb{x}}_{k}-\mb{x}^*\|_2$, and~$\|\mb{z}_{k}-\widehat{\mb{z}}_{k}\|$, as well as~$\|\nabla\mb{f}_k\|$. The coefficients will be shown to be the entries of~$G$ and $H_{k}$.

\textbf{Step 1:} Bounding~$\|\mb{x}_{k+1}-\widehat{\mb{x}}_{k+1}\|$. \\
According to Eq.~\eqref{alg1_ma} and Lemma~\ref{w-x-}(b), we obtain that
\begin{align}
\left\|\mb{x}_{k+1}-\widehat{\mb{x}}_{k+1}\right\|\leq&\left\|A\mb{x}_{k}-\widehat{\mb{x}}_{k}\right\|+\alpha\left\|\mb{z}_{k}-\widehat{\mb{z}}_{k}\right\|.
\end{align}
By noting that~$\|A\mb{x}_{k}-\widehat{\mb{x}}_{k}\|\leq\sigma\|\mb{x}_{k}-\widehat{\mb{x}}_{k}\|$ from Lemma~\ref{lem3}, we have
\begin{align}\label{step1}
\left\|\mb{x}_{k+1}-\widehat{\mb{x}}_{k+1}\right\|\leq&\sigma\left\|\mb{x}_{k}-\widehat{\mb{x}}_{k}\right\|+\alpha\left\|\mb{z}_{k}-\widehat{\mb{z}}_{k}\right\|.
\end{align}

\textbf{Step 2:} Bounding~$\|\widehat{\mb{x}}_{k+1}-\mb{x}^*\|_2$. \\
By considering Lemma~\ref{w-x-}(b), we obtain that
\begin{align}\label{step2_eq1}
\left\|\widehat{\mb{x}}_{k+1}-\mb{x}^*\right\|_2\leq&\left\|\widehat{\mb{x}}_{k}-\alpha n\nabla\widehat{\mb{f}}_k-\mb{x}^*\right\|_2+\alpha\left\|\widehat{\mb{z}}_{k}-n\nabla\widehat{\mb{f}}_k\right\|_2.
\end{align}
Let~$\mb{x}_+=\widehat{\mb{x}}_{k}-\alpha(n\nabla\widehat{\mb{f}}_k)$. Considering the definition of~$\nabla\widehat{\mb{f}}_k$, the updates of~$\mb{x}_+$ performs a (centralized) gradient descent with step-size~$\alpha$ to minimize the objective function in Problem P1. Therefore, we have that, according to Lemma~\ref{gd_approach},
\begin{align}
\left\|\widehat{\mb{x}}_{k}-\alpha n\nabla\widehat{\mb{f}}_k-\mb{x}^*\right\|_2=\left\|\mb{x}_+-\mb{x}^*\right\|_2\leq\eta\left\|\widehat{\mb{x}}_{k}-\mb{x}^*\right\|_2.
\end{align}
We next bound the second term in the RHS of Eq.~\eqref{step2_eq1} by splitting it such that
\begin{align}\label{step2_eq2}
\left\|\widehat{\mb{z}}_{k}-n\nabla\widehat{\mb{f}}_k\right\|_2\leq&\left\|\widehat{\mb{z}}_{k}-Y_\infty\widetilde{Y}_\infty^{-1}\nabla\mb{f}_k\right\|_2\nonumber\\
&+\left\|Y_\infty\widetilde{Y}_\infty^{-1}\nabla\mb{f}_k-n\nabla\widehat{\mb{f}}_k\right\|_2.
\end{align}
The first term on the RHS of Eq.~\eqref{step2_eq2} is bounded by
\begin{align}\label{step2_eq3}
\left\|\widehat{\mb{z}}_{k}-Y_\infty\widetilde{Y}_\infty^{-1}\nabla\mb{f}_k\right\|_2&=\left\|Y_\infty\widetilde{Y}_k^{-1}\nabla\mb{f}_k-Y_\infty\widetilde{Y}_\infty^{-1}\nabla\mb{f}_k\right\|_2\nonumber\\
&\leq y\widetilde{y}^2\widetilde{T}\gamma_1^{k}\left\|\nabla\mb{f}_k\right\|_2,
\end{align}
where in the first equality we apply the result of Lemma~\ref{w-x-}(a) that~$\widehat{\mb{z}}_k=Y_\infty\widetilde{Y}_k^{-1}\nabla\mb{f}_k$, and in the second inequality we use the result of Lemma~\ref{yy-}(a). The second term on the RHS of Eq.~\eqref{step2_eq2} is bounded by
\begin{align}\label{step2_eq4}
\left\|Y_\infty\widetilde{Y}_\infty^{-1}\nabla\mb{f}_k-n\nabla\widehat{\mb{f}}_k\right\|_2&\leq \left\|\mb{1}_n\mb{1}_n^\top\right\|_2l\left\|\mb{x}_k-\widehat{\mb{x}}_k\right\|_2,\nonumber\\
&=nl\left\|\mb{x}_k-\widehat{\mb{x}}_k\right\|_2
\end{align}
where in the first inequality we use the relation that~$Y_\infty\widetilde{Y}_\infty^{-1}=\mb{1}_n\mb{1}_n^\top$. By combining Eqs.~\eqref{step2_eq2}, \eqref{step2_eq3}, and \eqref{step2_eq4}, it follows that
\begin{align}\label{step2_eq5}
\left\|\widehat{\mb{z}}_{k}-n\nabla\widehat{\mb{f}}_k\right\|_2\leq nl\left\|\mb{x}_k-\widehat{\mb{x}}_k\right\|_2+y\widetilde{y}^2\widetilde{T}\gamma_1^{k}\left\|\nabla\mb{f}_k\right\|_2.
\end{align}
Therefore, we can bound~$\|\widehat{\mb{x}}_{k+1}-\mb{x}^*\|_2$ as
\begin{align}\label{step2}
\left\|\widehat{\mb{x}}_{k+1}-\mb{x}^*\right\|_2\leq&\alpha cnl\left\|\mb{x}_k-\widehat{\mb{x}}_k\right\|+\eta\left\|\widehat{\mb{x}}_{k}-\mb{x}^*\right\|_2\nonumber\\
&+\alpha y\widetilde{y}^2\widetilde{T}\gamma_1^{k}\left\|\nabla\mb{f}_k\right\|_2.
\end{align}
\textbf{Step 3:} Bounding~$\|\mb{z}_{k+1}-\widehat{\mb{z}}_{k+1}\|$. \\
According to Eq.~\eqref{alg1_md}, we have
\begin{align}
&\left\|\mb{z}_{k+1}-\widehat{\mb{z}}_{k+1}\right\|\leq\left\|A\mb{z}_{k}-\widehat{\mb{z}}_{k}\right\|\nonumber\\
&+\left\|\left(\widetilde{Y}_{k+1}^{-1}\nabla \mb{f}_{k+1}-\widetilde{Y}_{k}^{-1}\nabla \mb{f}_{k}\right)-\left(\widehat{\mb{z}}_{k+1}-\widehat{\mb{z}}_{k}\right)\right\|.\label{step3_eq2}
\end{align}
With the result of Lemma~\ref{lem3}, we obtain that
\begin{align}\label{step3_eq22}
\left\|A\mb{z}_{k}-\widehat{\mb{z}}_{k}\right\|&\leq\sigma\left\|\mb{z}_{k}-\widehat{\mb{z}}_{k}\right\|.
\end{align}
Note that~$\widehat{\mb{z}}_k=Y_\infty\widetilde{Y}_k^{-1}\nabla\mb{f}_k$ from Lemma~\ref{w-x-}(a). Therefore,
\begin{align}
&\left\|\left(\widetilde{Y}_{k+1}^{-1}\nabla \mb{f}_{k+1}-\widetilde{Y}_{k}^{-1}\nabla \mb{f}_{k}\right)-\left(\widehat{\mb{z}}_{k+1}-\widehat{\mb{z}}_{k}\right)\right\|_2\nonumber\\
&=\left\|\left(I_n-Y_\infty\right)\left(\widetilde{Y}_{k+1}^{-1}\nabla \mb{f}_{k+1}-\widetilde{Y}_{k}^{-1}\nabla \mb{f}_{k}\right)\right\|_2,\nonumber\\
&\leq\epsilon\left\|\widetilde{Y}_{k+1}^{-1}\nabla \mb{f}_{k+1}-\widetilde{Y}_{k+1}^{-1}\nabla\mb{f}_{k}\right\|_2+\epsilon\left\|\widetilde{Y}_{k+1}^{-1}\nabla\mb{f}_{k}-\widetilde{Y}_{k}^{-1}\nabla\mb{f}_{k}\right\|_2,\nonumber\\
&\leq\epsilon\widetilde{y}l\left\|\mb{x}_{k+1}-\mb{x}_{k}\right\|_2+2\epsilon \widetilde{y}^2\widetilde{T}\gamma_1^{k}\left\|\nabla\mb{f}_{k}\right\|_2.\label{step3_eq3}
\end{align}
We now bound~$\|\mb{x}_{k+1}-\mb{x}_{k}\|_2$ in Eq.~\eqref{step3_eq3}. Note that~$(A-I_n)\widehat{\mb{x}}_k=\mb{0}_n$ for all~$k$, which results into
\begin{align}\label{step3_eq5}
\left\|\mb{x}_{k+1}-\mb{x}_{k}\right\|_2\leq&\left\|(A-I_n)\mb{x}_k\right\|_2+\alpha\left\|\mb{z}_k\right\|_2,\nonumber\\
\leq&\left\|(A-I_n)\left(\mb{x}_k-\widehat{\mb{x}}_k\right)\right\|_2+\alpha\left\|\mb{z}_k\right\|_2,\nonumber\\
\leq&\tau\left\|\mb{x}_k-\widehat{\mb{x}}_k\right\|_2+\alpha\left\|\mb{z}_k\right\|_2,
\end{align}
where~$\|\mb{z}_k\|_2$ can be bounded with the following derivation:
\begin{align}\label{step3_eq6}
\left\|\mb{z}_k\right\|_2\leq&\left\|\mb{z}_k-\widehat{\mb{z}}_k\right\|_2+\left\|\widehat{\mb{z}}_k-n\nabla\widehat{\mb{f}}_k\right\|_2+\left\|n\nabla\widehat{\mb{f}}_k-\mb{1}_n\mb{1}_n^\top\nabla\mb{f}^*\right\|_2,\nonumber\\
\leq&\left\|\mb{z}_k-\widehat{\mb{z}}_k\right\|_2+nl\left\|\mb{x}_k-\widehat{\mb{x}}_k\right\|_2+y\widetilde{y}^2\widetilde{T}\gamma_1^{k}\left\|\nabla\mb{f}_k\right\|_2\nonumber\\
&+nl\left\|\widehat{\mb{x}}_k-\mb{x}^*\right\|_2,
\end{align}
where we use the fact that~$\mb{1}_n\mb{1}_n^\top\nabla\mb{f}^*=\mb{0}_n$ and we bound~$\|\widehat{\mb{z}}_k-n\nabla\widehat{\mb{f}}_k\|_2$ using the result in Eq. \eqref{step2_eq5}.
By combining Eqs. \eqref{step3_eq2}, \eqref{step3_eq22}, \eqref{step3_eq3}, \eqref{step3_eq5}, and \eqref{step3_eq6}, we finally get that
\begin{align}\label{step3}
\left\|\mb{z}_{k+1}-\widehat{\mb{z}}_{k+1}\right\|\leq&\left(\alpha cd\epsilon\widetilde{y}nl^2+cd\epsilon\widetilde{y}l\tau\right)\left\|\mb{x}_k-\widehat{\mb{x}}_k\right\|\nonumber\\
&+\alpha d\epsilon\widetilde{y}nl^2\left\|\widehat{\mb{x}}_k-\mb{x}^*\right\|_2\nonumber\\
&+\left(\sigma+\alpha cd\epsilon\widetilde{y}l\right)\left\|\mb{z}_k-\widehat{\mb{z}}_k\right\|\nonumber\\
&+(\alpha\epsilon\widetilde{y}ly+2\epsilon)d\widetilde{y}^2\widetilde{T}\gamma_1^k\left\|\nabla\mb{f}_k\right\|_2.
\end{align}

\textbf{Step 4:} By combining Eqs.~\eqref{step1} in step 1, \eqref{step2} in step 2, and \eqref{step3} in step 3, we complete the proof.

\section{Numerical Experiments}\label{s5}
In this section, we verify the performance of the proposed algorithm. We compare the convergence rates between our algorithm and other existing methods, including DEXTRA~\cite{DEXTRA}, ADD-OPT ~\cite{ADD-OPT}, Push-DIGing~\cite{opdirect_nedicLinear}, Subgradient-Push~\cite{opdirect_Nedic}, Directed-Distributed Subgradient Descent~\cite{D-DGD}, and the Weight-Balancing Distributed Subgradient Descent~\cite{opdirect_Makhdoumi}. Our numerical experiments are based on the distributed logistic regression problem over a directed graph:
\begin{align}
\mb{z}^*=\underset{\mb{z}\in\mbb{R}^p}{\operatorname{argmin}}\frac{\beta}{2}\|\mb{z}\|^2+\sum_{i=1}^n\sum_{j=1}^{m_i}\ln\left[1+\exp\left(-\left(\mb{c}_{ij}^\top\mb{z}\right)b_{ij}\right)\right],\nonumber
\end{align}
where for any agent~$i$, it has access to~$m_i$ training examples,~$(\mb{c}_{ij},b_{ij})\in\mbb{R}^p\times\{-1,+1\}$, where~$\mb{c}_{ij}$ includes the~$p$ features of the~$j$th training example of agent~$i$, and~$b_{ij}$ is the corresponding label. The directed graph is shown in Fig.~\ref{fig1} This problem can be formulated in the form of P1 with the private objective function~$f_i$ being
\begin{align}
f_i=\frac{\beta}{2n}\|\mb{z}\|^2+\sum_{j=1}^{m_i}\ln\left[1+\exp\left(-\left(\mb{c}_{ij}^\top\mb{z}\right)b_{ij}\right)\right].\nonumber
\end{align}
\begin{figure}[!h]
	\begin{center}
		\noindent
		\includegraphics[width=1.8in]{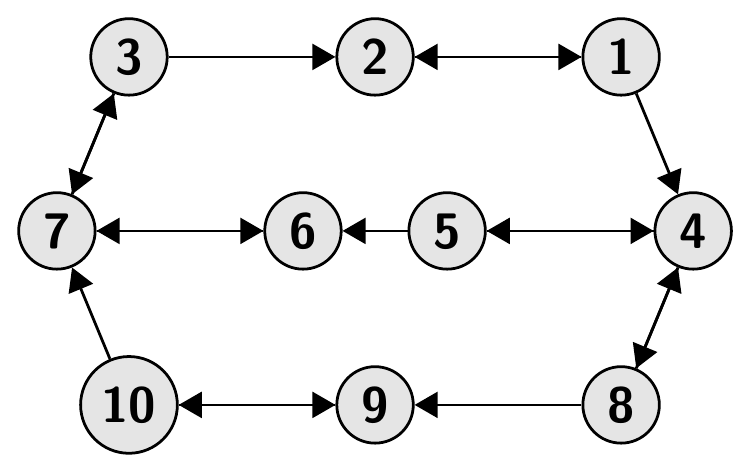}
		\caption{A strongly-connected directed network.}\label{fig1}
	\end{center}
\end{figure}
\begin{color}{black}
In our setting, we have~$n=10$,~$m_i=10$, for all~$i$, and~$p=3$. In the implementation, we apply the same local degree weighting strategy to all methods. Fig.~\ref{fig2} shows the convergence rates of all methods with appropriate step-sizes. The step-sizes used for Subgradient-Push, Directed-Distributed Subgradient Descent, and the Weight-Balancing Distributed Subgradient Descent are on the order of~$\alpha_k=O(\frac{1}{\sqrt{k}})$ at the~$k$th iteration. For the proposed algorithm in this paper, DEXTRA, and ADD-OPT, the corresponding constant step-sizes are~$\alpha=0.008$,~$\alpha=0.1$, and~$\alpha=0.03$, respectively. Note that this paper and ADD-OPT converge for a small enough positive step-size while DEXTRA requires a (strictly) positive lower bound. 

We note here that Subgradient-Push~\cite{opdirect_Nedic}, Directed-Distributed Subgradient Descent~\cite{D-DGD}, and the Weight-Balancing Distributed Subgradient Descent~\cite{opdirect_Makhdoumi} converge for general convex functions, while the algorithm in this paper, DEXTRA, ADD-OPT, and Push-DIGing work only for strongly-convex functions. Despite this fact, the algorithms in~\cite{opdirect_Nedic,D-DGD,opdirect_Makhdoumi} converge at a sublinear rate. In contrast, the algorithm in this paper, as well as ADD-OPT, Push-DIGing, and DEXTRA, has a fast linear convergence rate. Compared to ADD-OPT, Push-DIGing, and DEXTRA, our algorithm requires no knowledge of agents out-degree, which is more practical in certain communication networks, albeit we require that the agents know and have unique identifiers.
\end{color} 
\begin{figure}[!h]
	\begin{center}
		\noindent
		\subfigure{\includegraphics[width=3in]{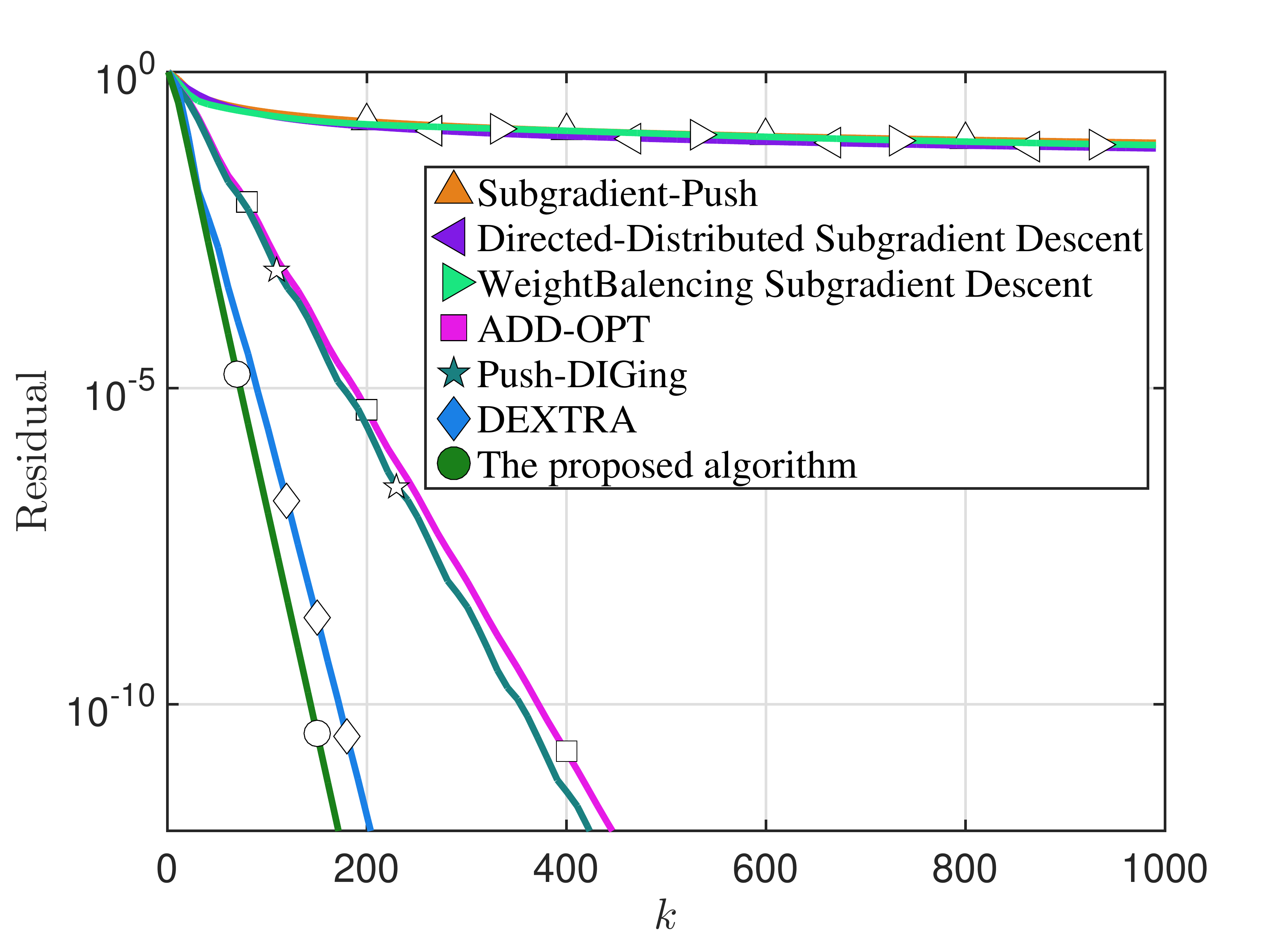}}
		\caption{Performance comparison between methods for directed networks.}\label{fig2}
	\end{center}
\end{figure}

\section{Conclusions}\label{s6}
This paper considers the distributed optimization problem of minimizing the sum of local objective functions over a multi-agent network, where the communication between agents is described by a \emph{directed} graph. Existing algorithms over directed graphs require at least the knowledge of neighbors' out-degree for all agents (due to the need for column-stochastic matrices). In contrast, our algorithm requires no such knowledge. Moreover, the proposed distributed algorithm achieves the best rate of convergence for this class of problems,~$O(\mu^k)$ for~$0<\mu<1$, given that the objective functions are strongly-convex with Lipschitz-continuous g, where~$k$ is the number of iterations.

{
	\footnotesize
	\bibliographystyle{IEEEbib}
	\bibliography{sample}

\begin{thebibliography}{10}

\bibitem{distributed_Boyd}
S.~Boyd, N.~Parikh, E.~Chu, B.~Peleato, and J.~Eckstein,
\newblock ``Distributed optimization and statistical learning via the
  alternating direction method of multipliers,''
\newblock {\em Foundation and Trends in Maching Learning}, vol. 3, no. 1, pp.
  1--122, Jan. 2011.

\bibitem{ml}
V.~Cevher, S.~Becker, and M.~Schmidt,
\newblock ``Convex optimization for big data: Scalable, randomized, and
  parallel algorithms for big data analytics,''
\newblock {\em IEEE Signal Processing Magazine}, vol. 31, no. 5, pp. 32--43,
  2014.

\bibitem{ml2}
S.~Shahrampour, S.~Rakhlin, and A.~Jadbabaie,
\newblock ``Online learning of dynamic parameters in social networks,''
\newblock in {\em Advances in Neural Information Processing Systems 26}, pp.
  2013--2021. Curran Associates, Inc., 2013.

\bibitem{distributed_Rabbit}
M.~Rabbat and R.~Nowak,
\newblock ``Distributed optimization in sensor networks,''
\newblock in {\em 3rd International Symposium on Information Processing in
  Sensor Networks}, Berkeley, CA, Apr. 2004, pp. 20--27.

\bibitem{distributed_Khan}
U.~A. Khan, S.~Kar, and J.~M.~F. Moura,
\newblock ``{DILAND}: An algorithm for distributed sensor localization with
  noisy distance measurements,''
\newblock {\em IEEE Transactions on Signal Processing}, vol. 58, no. 3, pp.
  1940--1947, Mar. 2010.

\bibitem{li2016pure}
L.~Li, T.~E. Baker, S.~R. White, and K.~Burke,
\newblock ``Pure density functional for strong correlations and the
  thermodynamic limit from machine learning,''
\newblock {\em Phys. Rev. B}, vol. 94, no. 24, pp. 245129, 2016.

\bibitem{li2016understanding}
L.~Li, J.~C. Snyder, I.~M. Pelaschier, J.~Huang, U.~Niranjan, P.~Duncan,
  M.~Rupp, K.~M{\"u}ller, and K.~Burke,
\newblock ``Understanding machine-learned density functionals,''
\newblock {\em International Journal of Quantum Chemistry}, vol. 116, no. 11,
  pp. 819--833, 2016.

\bibitem{brockherde2017bypassing}
F.~Brockherde, L.~Vogt, L.~Li, M.~E. Tuckerman, K.~Burke, and K.~M{\"u}ller,
\newblock ``Bypassing the kohn-sham equations with machine learning,''
\newblock {\em Nature Communications}, vol. 8, 2017.

\bibitem{formationcontrol}
R.~L. Raffard, C.~J. Tomlin, and S.~P. Boyd,
\newblock ``Distributed optimization for cooperative agents: application to
  formation flight,''
\newblock in {\em 43rd IEEE Conference on Decision and Control}, Dec 2004,
  vol.~3, pp. 2453--2459 Vol.3.

\bibitem{formationcontrol2}
W.~Ren, R.~W. Beard, and E.~M. Atkins,
\newblock ``Information consensus in multivehicle cooperative control,''
\newblock {\em IEEE Control Systems}, vol. 27, no. 2, pp. 71--82, Apr. 2007.

\bibitem{uc_Nedic}
A.~Nedic and A.~Ozdaglar,
\newblock ``Distributed subgradient methods for multi-agent optimization,''
\newblock {\em IEEE Transactions on Automatic Control}, vol. 54, no. 1, pp.
  48--61, Jan. 2009.

\bibitem{cc_Lobel2}
I.~Lobel and A.~Ozdaglar,
\newblock ``Distributed subgradient methods for convex optimization over random
  networks,''
\newblock {\em IEEE Transactions on Automatic Control}, vol. 56, no. 6, pp.
  1291--1306, Jun. 2011.

\bibitem{cc_nedic}
A.~Nedic, A.~Ozdaglar, and P.~A. Parrilo,
\newblock ``Constrained consensus and optimization in multi-agent networks,''
\newblock {\em IEEE Transactions on Automatic Control}, vol. 55, no. 4, pp.
  922--938, Apr. 2010.

\bibitem{cc_Ram2}
S.~S. Ram, A.~Nedic, and V.~V. Veeravalli,
\newblock ``Distributed stochastic subgradient projection algorithms for convex
  optimization,''
\newblock {\em Journal of Optimization Theory and Applications}, vol. 147, no.
  3, pp. 516--545, 2010.

\bibitem{dual_Terelius}
H.~Terelius, U.~Topcu, and R.~M. Murray,
\newblock ``Decentralized multi-agent optimization via dual decomposition,''
\newblock {\em IFAC Proceedings Volumes}, vol. 44, no. 1, pp. 11245--11251,
  2011.

\bibitem{ADMM_Mota}
J.~F.~C. Mota, J.~M.~F. Xavier, P.~M.~Q. Aguiar, and M.~Puschel,
\newblock ``D-{ADMM}: A communication-efficient distributed algorithm for
  separable optimization,''
\newblock {\em IEEE Transactions on Signal Processing}, vol. 61, no. 10, pp.
  2718--2723, May 2013.

\bibitem{ADMM_Wei}
E.~Wei and A.~Ozdaglar,
\newblock ``Distributed alternating direction method of multipliers,''
\newblock in {\em 51st IEEE Conference on Decision and Control}, Maui, HI, Dec.
  2012, pp. 5445--5450.

\bibitem{ADMM_Shi}
W.~Shi, Q.~Ling, K~Yuan, G~Wu, and W~Yin,
\newblock ``On the linear convergence of the {ADMM} in decentralized consensus
  optimization,''
\newblock {\em IEEE Transactions on Signal Processing}, vol. 62, no. 7, pp.
  1750--1761, April 2014.

\bibitem{fast_Gradient}
D.~Jakovetic, J.~Xavier, and J.~M.~F. Moura,
\newblock ``Fast distributed gradient methods,''
\newblock {\em IEEE Transactions on Automatic Control}, vol. 59, no. 5, pp.
  1131--1146, 2014.

\bibitem{EXTRA}
W.~Shi, Q.~Ling, G.~Wu, and W~Yin,
\newblock ``E{XTRA}: An exact first-order algorithm for decentralized consensus
  optimization,''
\newblock {\em SIAM Journal on Optimization}, vol. 25, no. 2, pp. 944--966,
  2015.

\bibitem{Augmented_EXTRA}
G.~Qu and N.~Li,
\newblock ``Harnessing smoothness to accelerate distributed optimization,''
\newblock {\em arXiv preprint arXiv:1605.07112}, 2016.

\bibitem{opdirect_Nedic}
A.~Nedić and A.~Olshevsky,
\newblock ``Distributed optimization over time-varying directed graphs,''
\newblock {\em IEEE Transactions on Automatic Control}, vol. 60, no. 3, pp.
  601--615, Mar. 2015.

\bibitem{opdirect_Tsianous}
K.~I. Tsianos, S.~Lawlor, and M.~G. Rabbat,
\newblock ``Push-sum distributed dual averaging for convex optimization,''
\newblock in {\em 51st IEEE Annual Conference on Decision and Control}, Maui,
  HI, Dec. 2012, pp. 5453--5458.

\bibitem{opdirect_Tsianous2}
K.~I. Tsianos,
\newblock {\em The role of the Network in Distributed Optimization Algorithms:
  Convergence Rates, Scalability, Communication/Computation Tradeoffs and
  Communication Delays},
\newblock Ph.D. thesis, Dept. Elect. Comp. Eng. McGill University, 2013.

\bibitem{opdirect_Tsianous3}
K.~I. Tsianos, S.~Lawlor, and M.~G. Rabbat,
\newblock ``Consensus-based distributed optimization: Practical issues and
  applications in large-scale machine learning,''
\newblock in {\em 50th Annual Allerton Conference on Communication, Control,
  and Computing}, Monticello, IL, USA, Oct. 2012, pp. 1543--1550.

\bibitem{ac_directed}
F.~Benezit, V.~Blondel, P.~Thiran, J.~Tsitsiklis, and M.~Vetterli,
\newblock ``Weighted gossip: Distributed averaging using non-doubly stochastic
  matrices,''
\newblock in {\em IEEE International Symposium on Information Theory}, Jun.
  2010, pp. 1753--1757.

\bibitem{ac_directed0}
D.~Kempe, A.~Dobra, and J.~Gehrke,
\newblock ``Gossip-based computation of aggregate information,''
\newblock in {\em 44th Annual IEEE Symposium on Foundations of Computer
  Science}, Oct. 2003, pp. 482--491.

\bibitem{D-DGD}
C.~Xi, Q.~Wu, and U.~A. Khan,
\newblock ``On the distributed optimization over directed networks,''
\newblock {\em Neurocomputing}, Jul. 2017.

\bibitem{D-DPS}
C.~Xi and U.~A. Khan,
\newblock ``Distributed subgradient projection algorithm over directed
  graphs,''
\newblock {\em arXiv preprint arXiv:1602.00653}, 2016.

\bibitem{ac_Cai1}
K.~Cai and H.~Ishii,
\newblock ``Average consensus on general strongly connected digraphs,''
\newblock {\em Automatica}, vol. 48, no. 11, pp. 2750 -- 2761, 2012.

\bibitem{opdirect_Makhdoumi}
A.~Makhdoumi and A.~Ozdaglar,
\newblock ``Graph balancing for distributed subgradient methods over directed
  graphs,''
\newblock in {\em 54th IEEE Conference on Decision and Control}, Osaka, Japan,
  Dec. 2015, pp. 1364--1371.

\bibitem{c_Hooi-Tong}
L.~Hooi-Tong,
\newblock ``On a class of directed graphs—with an application to traffic-flow
  problems,''
\newblock {\em Operations Research}, vol. 18, no. 1, pp. 87--94, 1970.

\bibitem{DEXTRA}
C.~Xi and U.~A. Khan,
\newblock ``{DEXTRA: A fast algorithm for optimization over directed graphs},''
\newblock {\em IEEE Transactions of Automatic Control}, Feb. 2017,
\newblock to appear.

\bibitem{ADD-OPT}
C.~Xi and U.~A. Khan,
\newblock ``A{DD-OPT}: Accelerated distributed directed optimization,''
\newblock {\em arXiv preprint arXiv:1607.04757}, 2016.

\bibitem{opdirect_nedicLinear}
A.~Nedich, A.~Olshevsky, and W.~Shi,
\newblock ``Achieving geometric convergence for distributed optimization over
  time-varying graphs,''
\newblock {\em arXiv preprint arXiv:1607.03218}, 2016.

\bibitem{opdirect_Mai}
V.~S. Mai and E.~H. Abed,
\newblock ``Distributed optimization over weighted directed graphs using row
  stochastic matrix,''
\newblock in {\em 2016 American Control Conference}, Boston, MA, July 2016, pp.
  7165--7170.

\bibitem{hornjohnson:13}
R.~A. Horn and C.~R. Johnson,
\newblock {\em Matrix Analysis},
\newblock Cambridge University Press, New York, NY, 2013.

\bibitem{plemmons:79}
A.~Berman and R.~J. Plemmons,
\newblock {\em Nonnegative Matrices in the Mathematical Sciences},
\newblock Academic Press, INC., New York, NY, 1970.

\bibitem{opt_literature0}
S.~Bubeck,
\newblock ``Convex optimization: Algorithms and complexity,''
\newblock {\em arXiv preprint arXiv:1405.4980}, 2014.

\bibitem{polyak1987introduction}
B.~Polyak,
\newblock {\em Introduction to optimization},
\newblock Optimization Software, 1987.

\end{thebibliography}
}

\end{document}